\title [Generating functions and maximal tori] {Generating functions and statistics on spaces of maximal tori in classical Lie groups} 
\author{Jason Fulman}  
\address{Department of Mathematics,  University of Southern California\
Los Angeles, CA, United States 90089-2532} 
\email{fulman@usc.edu}  
\thanks{Fulman was partially supported by Simons Foundation grant 400528.} 
\author{Rita Jim\'enez Rolland}  
\address{Instituto de Matem\'aticas, Universidad Nacional Aut\'onoma de M\'exico
 Oaxaca de Ju\'arez, Oaxaca,  M\'exico 68000} 
\email{rita@im.unam.mx}  
\thanks{Jim\'enez Rolland  is grateful for the financial support from  PAPIIT-UNAM grant IA100816.}
\author{Jennifer C. H. Wilson}  
\address{Department of Mathematics,  Stanford University\\
   Stanford, CA, United States 94305} 
\email{jchw@stanford.edu}  
\keywords{generating functions, finite groups of Lie type, maximal tori, symmetric group, hyperoctahedral group, signed permutation group, representation stability, homological stability}
\subjclass[2010]{
57T15,  
05A15,  
20G40,  
20G05   
}
\newcommand{\Z}{\mathbb{Z}}
\newcommand{\Q}{\mathbb{Q}}
\newcommand{\Co}{\mathbb{C}}
\newcommand{\GL}{\mathrm{GL}}
\newcommand{\Sp}{\mathrm{Sp}}
\newcommand{\SO}{\mathrm{SO}}
\newcommand{\GLnq}{GL_n(F_q)}
\newcommand{\Spnq}{Sp_{2n}(F_q)}
\newtheorem{prop}{Proposition}[section]
\newtheorem{lemma}[prop]{Lemma}
\newtheorem{cor}[prop]{Corollary}
\newtheorem{theorem}[prop]{Theorem}
\theoremstyle{definition}
\newtheorem{remark}[prop]{Remark}
\newtheorem{example}[prop]{Example}
\newtheorem{definition}[prop]{Definition}
\newcommand{\Y}[1]{{\tiny\yng(#1)}}
\begin{document} 
\begin{abstract} In this paper we use generating function methods  to obtain  new  asymptotic results about spaces of $F$-stable maximal tori in $GL_n(\overline{F_q})$, $Sp_{2n}(\overline{F_q})$, and $SO_{2n+1}(\overline{F_q})$. We recover stability results of  Church--Ellenberg--Farb and Jim\'enez Rolland--Wilson for ``polynomial'' statistics on these spaces, and we compute explicit formulas for their stable values. We  derive a double  generating function for the characters of  the cohomology of flag varieties in type B/C, which we use to  obtain analogs in type B/C of results of Chen: we recover ``twisted homological stability'' for the spaces of maximal tori in $Sp_{2n}(\Co)$ and $SO_{2n+1}(\Co)$, and we compute a generating function for their ``stable twisted Betti numbers''. We also give a new proof of a result of Lehrer using symmetric function theory.
\end{abstract}
\maketitle
\tableofcontents

\section{Introduction}

In recent work, Church, Ellenberg, and Farb \cite{CEF} and Jim\'enez Rolland and Wilson \cite{RW} proved asymptotic stability results for certain `polynomial' statistics on the set of  maximal tori in $\GL_n(\overline{F_q})$, $\Sp_{2n}(\overline{F_q})$, and $\SO_{2n+1}(\overline{F_q})$  that are stable under the action of the Frobenius morphism $F$. In this paper we use generating function techniques to give new proofs of these results, and moreover compute explicit formulas for the limiting values of these statistics. We then compute a double generating function for the characters of  the cohomology of flag varieties in type B/C. With these computations we derive analogs in type B/C of results of Chen \cite{Che} in type A: we prove ``twisted homological stability'' for the spaces of maximal tori in $\Sp_{2n}(\Co)$ and  $\SO_{2n+1}(\Co)$, and we give a generating function for the ``stable twisted Betti numbers''. The form of this generating function implies that the stable Betti numbers are quasipolynomial and satisfy linear recurrence relations.

\subsection{Statistics on maximal tori in classical Lie groups}

The paper \cite{CEF} proved results on maximal tori in type $A$, and \cite{RW} on maximal tori in types $B$ and $C$, by using the following formulas. These formulas, first obtained by Lehrer, relate the representation theory of the Lie groups' associated \emph{coinvariant algebras}  with point-counts on the set of  $F$-stable maximal tori.  Each $F$-stable maximal torus $T$ of $\GL_n(\overline{F_q})$ is naturally associated to a conjugacy class of the symmetric group $S_n$; this correspondence is explained in Carter \cite[Chapter 3]{Ca} or Niemeyer--Praeger \cite[Sections 2 and 3]{NP}.  Hence any class function $\chi$ of $S_n$ can be interpreted as a function on maximal tori, with $\chi(T)$ defined  to be  the value of $\chi$ in the corresponding $S_n$--conjugacy class. Similarly, for each $F$-stable maximal torus $T$ of $\Sp_{2n}(\overline{F_q})$  or $\SO_{2n+1}(\overline{F_q})$ there is an associated conjugacy class of the hyperoctahedral group $B_n$. Lehrer proved the following.

\begin{theorem} [\textbf{Statistics on maximal tori} {\cite[Corollary 1.10]{L}}] \label{classf}
\qquad
\begin{itemize}
\item[(i)]  { (\textbf{Type A}).} Let $\chi$ be a class function on  $S_n$ and let $T(n,q)$ denote the set of $F$-stable maximal tori of $\GL_n(\overline{F_q})$.  Then
\begin{equation}\label{lehrers} \frac{1}{ q^{n^2-n}}\sum_{T \in T(n,q)} \chi(T) = \sum_{i= 0}^{{n \choose 2}} q^{-i} \langle \chi,R_n^i \rangle_{S_n}\end{equation} 
where $R_n^i$ is the $i^{th}$ graded piece of the coinvariant algebra $R_n^*$ in type $A$, and $\langle -,-\rangle_{S_n}$ is the standard inner product on $S_n$ class functions.
\item[(ii)]  {(\textbf{Type B/C}).} Let $\chi$ be a class function on $B_n$ and let $T(n,q)$ denote the set of $F$-stable maximal tori of $\SO_{2n+1}(\overline{F_q})$ or $\Sp_{2n}(\overline{F_q})$.  Then
\begin{equation}\label{lehrers2}\frac{1}{q^{2n^2}}\sum_{T \in T(n,q)} \chi(T) = \sum_{i= 0}^{n^2} q^{-i} \langle \chi,R_n^i \rangle_{B_n},\end{equation}
where $R_n^i$ is the $i^{th}$ graded piece of the coinvariant algebra $R_n^*$ in type $B/C$, and $\langle -,-\rangle_{B_n}$ is the standard inner product on $B_n$ class functions.

\end{itemize}
\end{theorem}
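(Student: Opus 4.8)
The plan is to prove both identities by the same two‑sided computation: rewrite each side as a weighted sum over the relevant Weyl group $W$ — $W=S_n$ in type A, $W=B_n$ in type B/C — and then verify that the two weighted sums coincide term by term.

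\emph{The torus side.} The $F$-stable maximal tori of $G$ are parametrized, up to $G^F$-conjugacy, by $F$-conjugacy classes of $W$; since $G$ is split, $F$ acts trivially on $W$ and these are ordinary conjugacy classes (see \cite[Chapter 3]{Ca}). For a torus $T_w$ of type $w$ one has $N_G(T_w)^F/T_w^F\cong C_W(w)$ (Lang's theorem), and there is a single $G^F$-class of tori of each type $[w]$, so the number of $F$-stable maximal tori of type $[w]$ equals $|G^F|/(|T_w^F|\,|C_W(w)|)$. Summing over types and converting the sum over classes into a sum over group elements gives
\[
\sum_{T\in T(n,q)}\chi(T)=\frac{|G^F|}{|W|}\sum_{w\in W}\frac{\chi(w)}{|T_w^F|}.
\]
The quantity $|T_w^F|$ is read off combinatorially from $w$: in type A a cycle of length $\ell$ contributes a factor $q^\ell-1$; in type B/C a positive $\ell$-cycle of the signed permutation $w$ contributes $q^\ell-1$ and a negative $\ell$-cycle contributes $q^\ell+1$. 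In all cases one checks directly that $|T_w^F|=\det_V(qI-w)$, where $V$ is the reflection representation of $W$ (the $n$-dimensional permutation module in type A, the signed-permutation module in type B/C): the eigenvalues of $w$ on $V$ are exactly the corresponding roots of unity.

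\emph{The coinvariant side.} By Chevalley's theorem $\mathbb{C}[V]$ is free over $\mathbb{C}[V]^W$, the latter a polynomial ring on homogeneous generators of degrees $d_1,\dots,d_n$ (equal to $1,2,\dots,n$ in type A and $2,4,\dots,2n$ in type B/C), with $\mathbb{C}[V]\cong R_n^*\otimes\mathbb{C}[V]^W$ as graded $W$-modules. Combining this with Molien's formula $\sum_{i\ge 0}t^i\,\mathrm{tr}(w\mid\mathbb{C}[V]^i)=1/\det_V(I-tw)$ yields the graded character of the coinvariant algebra, $\sum_{i\ge 0}t^i\,\mathrm{tr}(w\mid R_n^i)=\prod_{j=1}^n(1-t^{d_j})/\det_V(I-tw)$. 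Specializing $t=q^{-1}$, using $\det_V(I-q^{-1}w)=q^{-n}\det_V(qI-w)=q^{-n}|T_w^F|$ from the previous step, and expanding $\langle\chi,R_n^i\rangle_W=\tfrac1{|W|}\sum_{w}\chi(w)\,\mathrm{tr}(w\mid R_n^i)$ (the modules $R_n^i$ have real character), one obtains
\[
\sum_{i\ge 0}q^{-i}\langle\chi,R_n^i\rangle_W
=\frac{q^{\,n}\prod_{j=1}^n(1-q^{-d_j})}{|W|}\sum_{w\in W}\frac{\chi(w)}{|T_w^F|}.
\]
It then remains only to match the two scalar prefactors, i.e.\ to check $|G^F|/q^{2N}=q^{\,n}\prod_{j=1}^n(1-q^{-d_j})$, where $N$ is the number of positive roots and $q^{2N}$ is the normalizing power of $q$ in the theorem ($q^{n^2-n}=q^{2\binom n2}$ in type A, $q^{2n^2}$ in type B/C). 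This is immediate from $|G^F|=q^{N}\prod_{j=1}^n(q^{d_j}-1)$ together with the exponent identity $\sum_{j=1}^n(d_j-1)=N$ — namely $\sum_{j=1}^n(j-1)=\binom n2$ in type A and $\sum_{j=1}^n(2j-1)=n^2$ in type B/C. (Taking $\chi=1$ recovers Steinberg's count $|T(n,q)|=q^{2N}$, a useful consistency check.)

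I expect the main obstacle to be the bookkeeping on the torus side — pinning down the count of tori of each type $[w]$ and establishing $|T_w^F|=\det_V(qI-w)$ uniformly across types A and B/C — since this is where the geometry of algebraic groups over $\overline{F_q}$ actually enters; once that is in place, the remainder is a formal manipulation of Molien's formula and a comparison of elementary $q$-identities.
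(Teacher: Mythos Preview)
Your argument is correct and proves both parts uniformly; it is essentially the classical Lehrer-style proof via the graded trace formula $\sum_i t^i\,\mathrm{tr}(w\mid R_n^i)=\prod_j(1-t^{d_j})/\det_V(I-tw)$, specialized at $t=q^{-1}$ and matched against the Springer--Steinberg count of tori of each type.

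The paper takes a genuinely different route, and only for part~(i). Rather than invoking Molien/Chevalley, it computes each side of \eqref{lehrers} as an element of the ring of symmetric functions. On the torus side it uses Lemma~\ref{count} to write the left-hand side as $q^{-(n^2-n)}\sum_{|\mu|=n}\frac{|\GLnq|}{z_\mu\prod_k(q^k-1)^{n_k(\mu)}}\,p_\mu$. On the coinvariant side it invokes the Stanley--Lusztig--Kra\'skiewicz--Weyman description of $R_n^i$ (Lemma~\ref{reut}) to identify the multiplicity of the irreducible $\chi^\lambda$ in $R_n^i$ with the number $f_{\lambda,i}$ of standard tableaux of shape $\lambda$ and major index $i$, rewrites the right-hand side as $\sum_\lambda s_\lambda\sum_{\mathrm{sh}(Y)=\lambda}q^{-\mathrm{maj}(Y)}$, and then uses the principal specialization $s_\lambda(1,1/q,1/q^2,\dots)$ and the Cauchy-type identity $\sum_\lambda s_\lambda s_\lambda = \sum_\mu z_\mu^{-1}p_\mu p_\mu$ to bring it to the same form. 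Your approach is cleaner and type-uniform (indeed it covers part~(ii), which the paper does not reprove), while the paper's approach exposes the tableau/major-index combinatorics underlying the type-A identity and demonstrates that symmetric-function machinery alone suffices---which is the paper's stated goal for this result.
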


It will always be clear from context whether $T(n,q)$ refers to type $A$ or to type $B/C$.
 In Section \ref{GL} we give a new proof of Lehrer's result in type A, Theorem \ref{classf}(i), using symmetric function theory. See Church--Ellenberg--Farb \cite[Theorem 5.3]{CEF} for a proof using the Grothendieck--Lefschetz Theorem.

To obtain their asymptotic stability results on polynomial statistics on the space of maximal tori, Church--Ellenberg--Farb \cite{CEF} and Jim\'enez Rolland--Wilson \cite{RW} study the right-hand side of the formulas in Theorem  \ref{classf}  as $n$ tends to infinity. They use results from the field of \emph{representation stability} to show that for sequences of characters $\chi_n$ that are determined by a \emph{character polynomial} (Definitions \ref{DefnTypeACharPoly} and \ref{DefnTypeBCCharPoly}),  the quantities $\langle \chi_n,R_n^i \rangle_{S_n}$ and  $\langle \chi_n,R_n^i \rangle_{B_n}$  stabilize for $n$ sufficiently large relative to $i$.  They then prove that the stable values are subexponential in $i$, and so conclude that the series on the right-hand side of Equations (\ref{lehrers}) and (\ref{lehrers2}) must converge.

In this paper we study instead the left-hand side of the formulas in Theorem \ref{classf}, using  generating function techniques analogous to the work of Fulman \cite{F} and later Chen \cite{Che}. In Theorem \ref{asym} we give explicit formulas for the limiting values of the left-hand sides of these equations for sequences of characters defined by a character polynomial.

\subsection{Character polynomials in type $A$ and $B/C$.}

Character polynomials for the symmetric groups are implicit in the work of Murnaghan and Specht, and are studied explicitly by Macdonald \cite[I.7.14]{M}. Here we recall their definitions and their analogs in type B/C .

\begin{definition}[\textbf{The functions $X_r$; character polynomials in type $A$}] \label{DefnTypeACharPoly} For a permutation $\sigma \in S_n$ and integer $r \geq 1$,  let $X_r(\sigma)$ denote the number of cycles of size $r$ in the cycle decomposition of $\sigma$. A polynomial in the functions $X_r$ is called a {\it character polynomial} for $S_n$; a character polynomial determines a class function on $S_n$ for each $n\geq 0$. Character polynomials over $\Q$ form a graded ring with generator $X_r$ in degree $r$. \end{definition}

\begin{definition}[\textbf{The $S_n$ character polynomials ${ X \choose \lambda}$}] Given a partition $\lambda$, let $|\lambda|$ be the size of $\lambda$, and let $n_r(\lambda)$ denote the number of parts of $\lambda$ of length $r$. In this notation, define a character polynomial  ${X \choose \lambda}$ of degree $|\lambda|$ by
$$ {X \choose \lambda} (\sigma) := \prod_{r =1}^{|\lambda|} {X_r(\sigma) \choose n_r(\lambda)}  \qquad \text{for all } \sigma \in S_n, \text{ for all } n \in \Z_{\geq 0}.    $$
Character polynomials of the form ${X \choose \lambda}$  span the space of character polynomials with rational coefficients.
\end{definition}

Recall that the conjugacy classes of $B_n$  are parameterized by \emph{double partitions} of $n$, that is, ordered pairs of partitions $(\mu,\lambda)$ such that $|\mu|+|\lambda|=n$. We adopt the convention that the parts of $\mu$ encode the lengths of the  positive cycles of a signed permutation, and the parts of $\lambda$ encode the lengths of the negative cycles.

\begin{definition}[\textbf{The functions $X_r$ and $Y_r$; character polynomials in type $B/C$}]  \label{DefnTypeBCCharPoly}
For $ \sigma \in B_n$, let $X_r(\sigma)$ be the number of positive $r$-cycles of $\sigma$, and let $Y_r(\sigma)$ be the number of negative $r$-cycles. A \emph{character polynomial} for $B_n$ is a polynomial in the graded ring $\mathbb{Q}[X_1,Y_1, X_2, Y_2, \ldots]$. Its degree is defined by assigning $\deg X_r =\deg Y_r =r$ for each $r \geq 1$.  \end{definition}
\begin{definition} {\bf (The $B_n$ character polynomials ${X \choose \mu} {Y \choose \lambda}$).}
The space of rational character polynomials in type $B/C$ is spanned by character polynomials of the following form. Given a double partition $(\mu, \lambda)$, define a degree $|\mu|+ |\lambda|$ character polynomial ${X \choose \mu} {Y \choose \lambda}$ by the formula
$$ {X \choose \mu} {Y \choose \lambda} (\sigma) := \prod_{r = 1}^{|\lambda|+|\mu|} {X_r(\sigma) \choose n_r(\mu)} {Y_r(\sigma) \choose n_r(\lambda)} \qquad \text{for all } \sigma \in B_n, n \in \Z_{\geq 0}.  $$
\end{definition}


\subsection{Asymptotic results for polynomial statistics on F-stable maximal tori}
 
Given an $F$-stable  maximal torus $T$  in $GL_{n}(\overline{F_q})$ and a character polynomial $P$, the value $P(T)$ is given by evaluating $P$ on the $S_n$--conjugacy class associated to $T$. Specifically, the statistic $X_r(T)$ counts the number of $r$-dimensional irreducible  $F$-stable subtori of $T$; see Church--Ellenberg--Farb \cite[Section 5]{CEF}. 

For an $F$-stable  maximal torus $T$  in $SO_{2n+1}(\overline{F_q})$  or $Sp_{2n}(\overline{F_q})$ we call the associated double partition $(\mu_T,\lambda_T)$ the \emph{type} of $T$. The value of  a character polynomial $P$ in $T$  is given by the value of $P$ on the conjugacy class $(\mu_T,\lambda_T)$. The statistic $X_r(T)$ (respectively, $Y_r(T)$) counts the number of $r$-dimensional $F$-stable subtori that are irreducible over $F_q$  and that split (respectively, do not split) over $F_{q^r}$.  See Jim\'enez Rolland--Wilson \cite[Section 4.1.4]{RW} for a more detailed explanation of this interpretation.  

In types $A$, $B$, and $C$, we say that a character polynomial $P$ defines a \emph{polynomial statistic} on the corresponding sequence of spaces of maximal tori.  In Theorem \ref{asym} we give formulas for the limits of the expected values of polynomial statistics on $F$-stable maximal tori in type $A$ and $B/C$. To state this theorem, define the following notation.

 For a partition $\lambda$ of $n$, let $$z_{\lambda} = \prod_{r =1}^{|\lambda|} n_r(\lambda)! \; r^{n_r(\lambda)} $$ so $z_{\lambda} $ is defined such that $\frac{n!}{z_{\lambda} }$ is the number of permutations in $S_n$ of cycle type $\lambda$.
Analogously, for a partition $\mu$, let $$ v_{\mu} = \prod_{r =1}^{|\mu|} n_r(\mu)! \;(2r)^{n_r(\mu)} $$ so $\frac{2^n n!}{v_{\mu}v_{\lambda}}$ is the number of signed permutations in $B_n$ of signed cycle type $(\mu, \lambda)$.

\begin{theorem} [\textbf{Asymptotics for polynomial statistics on maximal tori}]\label{asym}\
\begin{itemize}
\item[(i)] {\bf (Type A).}  Let $\lambda$  be a fixed partition. Let  $T(n,q)$ denote the set of $F$-stable maximal tori of $GL_n(\overline{F_q})$. Then
\[ \lim_{n \rightarrow \infty} \frac{1}{q^{n^2-n}} \sum_{T \in T(n,q)} {X \choose \lambda} (T)
= \frac{1}{z_{\lambda}} \prod_{r = 1}^{|\lambda|} \left( \frac{q^r}{q^r-1} \right)^{n_r(\lambda)} .\]

\item[(ii)] {\bf (Type B/C).} Fix partitions $\mu$ and $\lambda$.  Let  $T(n,q)$ denote the set of $F$-stable maximal tori of $SO_{2n+1}(\overline{F_q})$ or $Sp_{2n}(\overline{F_q})$. Then
\begin{eqnarray*}
& &  \lim_{n \rightarrow \infty} \frac{1}{q^{2n^2}} \sum_{T \in T(n,q)} {X \choose \mu} {Y \choose \lambda} (T)\\
& =&  \frac{1}{v_{\mu} v_{\lambda}} \prod_{r = 1}^{|\mu|}  \left( \frac{q^r}{q^r-1} \right)^{n_r(\mu)}
\prod_{r=1}^{|\lambda|}  \left( \frac{q^r}{q^r+1} \right)^{n_r(\lambda)}.
\end{eqnarray*}
\end{itemize}
\end{theorem}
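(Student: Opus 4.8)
The plan is to compute, for each $n$ and $q$, the exact value of the left-hand sum $\frac{1}{q^{n^2-n}}\sum_{T} {X \choose \lambda}(T)$ (and its type B/C analogue) by translating the sum over $F$-stable maximal tori into a sum over conjugacy classes of $S_n$ (resp.\ $B_n$), weighting each class by $1/z_\mu$ (resp.\ $1/(v_\mu v_\lambda)$), and then packaging the resulting double sum over $n$ into a generating function. The key input is the classical fact — going back to Steinberg and recorded in Carter \cite{Ca} — that the number of $F$-stable maximal tori of $GL_n(\overline{F_q})$ associated to the conjugacy class of $\sigma \in S_n$ equals $q^{n^2-n}/|C_{S_n}(\sigma)| \cdot (\text{something})$; more precisely the normalized count $\frac{1}{q^{n^2-n}}|\{T : T \leftrightarrow [\sigma]\}|$ is a rational function of $q$ depending only on the cycle type, and summing the $q^{-i}\langle \chi, R_n^i\rangle$ side of Lehrer's formula with $\chi$ the indicator of a class recovers it. In fact, the cleanest route is: by linearity it suffices to evaluate the sum for $\chi = {X \choose \lambda}$, and by Theorem \ref{classf} this equals $\sum_i q^{-i}\langle {X \choose \lambda}, R_n^i\rangle$, but I would instead work directly with the torus side. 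The standard cycle-index identity (see Fulman \cite{F}) states
\[
\sum_{n \geq 0} \frac{u^n}{q^{n^2-n}} \sum_{T \in T(n,q)} \prod_r x_r^{X_r(T)}
= \prod_{r \geq 1} \exp\!\left( \frac{x_r u^r}{r}\cdot\frac{q^r}{q^r-1}\cdot\frac{1}{q^r}\cdots \right),
\]
i.e.\ the generating function factors as an exponential over cycle lengths, with the coefficient attached to each $r$-cycle being $\frac{1}{r}\cdot\frac{1}{q^r - 1}$ after the normalization by $q^{n^2-n}$ is absorbed. Granting this product formula, substituting $x_r = 1 + t_r$ extracts exactly ${X \choose \lambda}(T) = \prod_r {X_r(T) \choose n_r(\lambda)}$ as the coefficient of $\prod_r t_r^{n_r(\lambda)}$, and one reads off the $n \to \infty$ limit (equivalently, sets $u = 1$ in the stabilized generating function) as $\prod_r \frac{1}{n_r(\lambda)!}\left(\frac{1}{r}\cdot\frac{q^r}{q^r-1}\right)^{n_r(\lambda)}$, which rearranges to the claimed $\frac{1}{z_\lambda}\prod_r \left(\frac{q^r}{q^r-1}\right)^{n_r(\lambda)}$.

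For type B/C the structure is identical but with signed cycles: there is an analogous cycle-index factorization over positive and negative cycles, in which each positive $r$-cycle contributes $\frac{1}{2r}\cdot\frac{q^r}{q^r-1}$ and each negative $r$-cycle contributes $\frac{1}{2r}\cdot\frac{q^r}{q^r+1}$ — the sign flip on the negative cycles is the arithmetic reflection of the fact that a negative $r$-cycle corresponds to a torus factor split by $F_{q^r}$ with the opposite-signed eigenvalue, so the relevant local factor is $\frac{1}{q^r - (-1)} = \frac{1}{q^r+1}$. Substituting $x_r = 1+s_r$, $y_r = 1 + t_r$ and extracting the coefficient of $\prod_r s_r^{n_r(\mu)} t_r^{n_r(\lambda)}$, then passing to the limit, yields
\[
\frac{1}{v_\mu v_\lambda}\prod_{r=1}^{|\mu|}\left(\frac{q^r}{q^r-1}\right)^{n_r(\mu)}\prod_{r=1}^{|\lambda|}\left(\frac{q^r}{q^r+1}\right)^{n_r(\lambda)},
\]
using $v_\mu = \prod_r n_r(\mu)!\,(2r)^{n_r(\mu)}$ to collect the $\frac{1}{2r}$ and $\frac{1}{n_r(\mu)!}$ factors.

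The main obstacle — and the step that needs genuine proof rather than bookkeeping — is establishing the cycle-index factorizations, i.e.\ that $\frac{1}{q^{n^2-n}}$ times the number of $F$-stable maximal tori in a given $S_n$-class (resp.\ $B_n$-class) has a product structure with the stated local factors. In type A one can derive this from Lehrer's formula (Theorem \ref{classf}(i)) by taking $\chi = {X\choose\lambda}$, using the known generating function for the graded pieces of the coinvariant algebra of $S_n$ (its Hilbert series is $\prod_{j=1}^n \frac{1-q^j}{1-q}$, Frobenius-graded version via the principal specialization), and matching; alternatively one invokes Steinberg's count of tori and the standard fact that tori in $GL_n$ in a given class are counted by $q^{\ell(w)}$-type sums over the Weyl group. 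I would spell out the type A case via Lehrer's formula and symmetric-function manipulation (consistent with the paper's stated method and its Section \ref{GL}), then obtain the type B/C factorization the same way from Theorem \ref{classf}(ii), taking care with the subtlety that in type B/C the signed-cycle structure produces the two different local factors $\frac{1}{q^r \mp 1}$; verifying this sign dichotomy against the coinvariant-algebra side for $B_n$ is where I expect to spend the most effort. Once the product formulas are in hand, convergence of the $n\to\infty$ limit and the final algebraic simplification are routine, since each local factor is a geometric-type series in $q^{-r}$ and only finitely many $r$ (those with $n_r \neq 0$) contribute nontrivially.
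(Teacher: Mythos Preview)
Your overall strategy---use the cycle-index generating function for $F$-stable maximal tori, extract the statistic $\binom{X}{\lambda}$ (or $\binom{X}{\mu}\binom{Y}{\lambda}$), and read off the $n\to\infty$ limit---is the same as the paper's. The substitution $x_r = 1+t_r$ followed by coefficient extraction is equivalent to the paper's method of differentiating $n_r(\lambda)$ times in $x_r$ and setting $x_r=1$ (this is how Theorem~\ref{basic} is proved). So at the level of strategy you are aligned with the paper.

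There are, however, two concrete issues.

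\textbf{The limit extraction is not ``set $u=1$''.} You write that one ``sets $u=1$ in the stabilized generating function'' and that convergence is ``routine.'' This is the one genuinely non-bookkeeping step, and the paper treats it carefully. First, the clean product formula (equation~\eqref{jef} and Theorem~\ref{Spgen}) requires normalizing by $|\GLnq|$ or $|\Spnq|$, \emph{not} by $q^{n^2-n}$ or $q^{2n^2}$; with your normalization the generating function does not factor. Second, having written (say, in type~A)
\[
\frac{1}{q^{n^2-n}}\sum_T \binom{X}{\lambda}(T) = \frac{|\GLnq|}{q^{n^2-n}}\,[u^n]\,\frac{1}{z_\lambda}\prod_k\Bigl(\frac{u^k}{q^k-1}\Bigr)^{n_k(\lambda)}\prod_{r\ge 1}\frac{1}{1-u/q^r},
\]
the paper substitutes $u\mapsto uq$ (so $[u^n]f(u)=q^{-n}[u^n]f(uq)$), which shifts the infinite product to $\frac{1}{1-u}\prod_{r\ge 1}\frac{1}{1-u/q^r}$ and turns the prefactor into $(1-1/q)\cdots(1-1/q^n)$. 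Only then does one apply Lemma~\ref{tay}, $\lim_n [u^n]\frac{f(u)}{1-u}=f(1)$, together with convergence of the Taylor series of $\prod_{r\ge 1}(1-u/q^r)^{-1}$ at $u=1$. The prefactor and the infinite product then cancel in the limit. The type B/C case is identical with $q^{2k}$ in place of $q^k$ in the infinite product. None of this is hard, but your proposal skips it entirely.

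\textbf{You have misidentified the main obstacle.} You flag the cycle-index factorization as the step needing ``genuine proof'' and propose to derive it from Lehrer's formula and the coinvariant algebra. In fact this factorization is immediate: the count of $F$-stable maximal tori of a given type is explicit (Lemmas~\ref{count} and~\ref{typeSp}, from Springer--Steinberg), and summing over types gives the exponential product directly via the Taylor expansion of $\exp$---this is the three-line proof of Theorem~\ref{Spgen}. Going through Lehrer and the graded $B_n$-structure of $R_n^*$ would be substantially harder and is in fact the direction the paper runs \emph{backwards}: it uses the torus-side computation to extract information about $R_n^*$, not the other way around.
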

Theorem \ref{asym}(i) is proved in Section \ref{GL} and Theorem \ref{asym}(ii) in Section \ref{Sp}.

In particular, Theorem \ref{asym} implies that the left-hand side of both equations converge for any character polynomial and any field $F_q$. Hence Theorem \ref{asym} recovers a primary result of Church, Ellenberg, and Farb in type A, a consequence of \cite[Theorem 5.6]{CEF}. In types B/C, it recovers a primary result of Jim\'enez Rolland and Wilson, \cite[Theorem 4.3]{RW}. Theorem \ref{asym} improves upon both convergence results by providing explicit formulas for the limits. In Examples \ref{EXAA} and  \ref{EXARW} below we use  Theorem \ref{asym}  to recompute the explicit asymptotic identities from \cite[Section 1]{CEF}  and \cite[Table 1]{RW}. 


\subsection{Cohomology of  flag varieties and maximal tori  in type B/C} \label{SectionCoinvariant}

Chen \cite{Che} uses  generating function techniques to count $F$-stable maximal tori of $GL_n(\overline{F_q})$ and uses the formula in Theorem \ref{classf}(i)  to extract topological information about the complex flag manifolds in type $A$, and the space of maximal tori in the complex group $GL_{n}(\Co)$.  In Section \ref{Sp} we  use Theorem \ref{classf}(ii) and follow his approach to study the cohomology of generalized flag manifolds and the spaces of maximal tori of $Sp_{2n}(\Co)$ and $SO_{2n+1}(\Co)$.

In what follows we denote by $R_n^*$ the \emph{complex coinvariant algebra of type B/C}.  Consider the action of $B_n$ on the polynomial ring $\Co[x_1, \ldots, x_n]$ by permuting and negating the $n$ variables $x_i$. The coinvariant algebra $R_n^*$ is defined as the quotient $R_n^*\cong\Co[x_1, \ldots, x_n] / I_n$ by the homogeneous ideal $I_n$ generated by the $B_n$--invariant polynomials with constant term zero.

Borel  \cite{B} proved that $R^*_n$ is isomorphic as a graded $\Co[B_n]$-algebra to the cohomology of the {\it generalized complete complex flag manifolds}  in type $B$  and $C$.
In type $C$,  the flag manifold is the variety of complete flags equal to their symplectic complements. The flag manifold in type $B$ is the variety of complete flags equal to their orthogonal complements. The cohomology groups of these flag manifolds are supported in even cohomological degree, and the isomorphism from $R^*_n$ to the cohomology ring multiplies the grading by two.

 Wilson \cite{W} uses the theory of \emph{FI$_\mathcal{W}$-modules} to prove that, in each cohomological degree,  the characters of the cohomology of these flag varieties  are given by a character polynomial -- independent of $n$ -- for all $n$ sufficiently large.

 \begin{theorem}[\textbf{Existence of character polynomials for $R_n^i$} {\cite[Corollary 6.5]{W}}] \label{CharPol} For each $i\geq 0$, there exists a unique hyperoctahedral character polynomial $Q_i$ such that $ \chi_{R_n^i} = Q_i$ for all $n$ sufficiently large (depending on $i$). Moreover the degree of the polynomial $Q_i$ is at most $i$.
\end{theorem}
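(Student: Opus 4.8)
The plan is to obtain an explicit formula for the graded characters $\chi_{R_n^*}$ by generating-function methods and then read off $Q_i$ together with its degree. First I would apply Chevalley's theorem to the reflection group $B_n$ acting on $V=\Co^n$ by signed permutations: the invariant ring $\Co[x_1,\dots,x_n]^{B_n}$ is a polynomial algebra on homogeneous generators of degrees $2,4,\dots,2n$, and there is an isomorphism of graded $\Co[B_n]$-modules $\Co[x_1,\dots,x_n]\cong\Co[x_1,\dots,x_n]^{B_n}\otimes_{\Co}R_n^*$ on which $B_n$ acts trivially on the first factor. Comparing graded characters (the invariant factor contributes the $\sigma$-independent Hilbert series $\prod_{j=1}^n(1-t^{2j})^{-1}$) yields, for every $\sigma\in B_n$,
\[
\sum_{i\ge 0}\chi_{R_n^i}(\sigma)\,t^i \;=\;\Bigl(\prod_{j=1}^{n}(1-t^{2j})\Bigr)\cdot\frac{1}{\det\bigl(\mathrm{id}_V - t\,\rho(\sigma)\bigr)},
\]
where $\rho$ is the defining $n$-dimensional signed-permutation representation of $B_n$.

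Second I would compute $\det(\mathrm{id}_V-t\rho(\sigma))$ block by block along the signed cycles of $\sigma$: a positive $r$-cycle block has characteristic polynomial $z^r-1$ and a negative $r$-cycle block (which satisfies $M^r=-I$ and acts cyclically) has characteristic polynomial $z^r+1$, so that $\det(\mathrm{id}_V-t\rho(\sigma))=\prod_{r\ge 1}(1-t^r)^{X_r(\sigma)}(1+t^r)^{Y_r(\sigma)}$. This gives the double generating function
\[
\sum_{i\ge 0}\chi_{R_n^i}(\sigma)\,t^i \;=\;\prod_{j=1}^{n}(1-t^{2j})\cdot\prod_{r\ge 1}\frac{1}{(1-t^r)^{X_r(\sigma)}\,(1+t^r)^{Y_r(\sigma)}}.
\]
For $n\ge\lceil i/2\rceil$ the finite product $\prod_{j=1}^n(1-t^{2j})$ agrees with the infinite product $\prod_{j\ge 1}(1-t^{2j})$ in all degrees $\le i$, so in that range $\chi_{R_n^i}(\sigma)$ equals the coefficient of $t^i$ in the fixed power series $\prod_{j\ge 1}(1-t^{2j})\cdot\prod_{r\ge 1}(1-t^r)^{-X_r(\sigma)}(1+t^r)^{-Y_r(\sigma)}$, which no longer depends on $n$.

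Third I would check that this coefficient is a polynomial in the $X_r,Y_r$ and bound its degree. Expanding $(1-t^r)^{-X_r}=\sum_{k\ge 0}\binom{X_r+k-1}{k}t^{rk}$ and $(1+t^r)^{-Y_r}=\sum_{k\ge 0}(-1)^k\binom{Y_r+k-1}{k}t^{rk}$, only finitely many monomials in $t$ contribute to degree $i$, only the variables $X_r,Y_r$ with $r\le i$ occur, and the relevant binomial coefficients are polynomials in $X_r$ (resp.\ $Y_r$); this defines a canonical $Q_i\in\Q[X_1,Y_1,\dots,X_i,Y_i]$ with $\chi_{R_n^i}=Q_i$ for all $n\ge\lceil i/2\rceil$. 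Tracking the weighted degree with the convention $\deg X_r=\deg Y_r=r$: a monomial of $Q_i$ arising from exponent choices $(k_r),(k'_r)$ together with a term $t^m$ from $\prod_j(1-t^{2j})$ satisfies $m+\sum_r r(k_r+k'_r)=i$ and has weighted degree at most $\sum_r r(k_r+k'_r)\le i$, so $\deg Q_i\le i$. Uniqueness is the standard fact that distinct hyperoctahedral character polynomials disagree as class functions on $B_n$ for all large $n$ — equivalently, a character polynomial vanishing on $B_n$ for every $n\gg 0$ is identically zero — which follows by evaluating on signed permutations realizing prescribed tuples $(X_1(\sigma),Y_1(\sigma),X_2(\sigma),Y_2(\sigma),\dots)$.

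The step I expect to require the most care is the third one: pinning down exactly how large $n$ must be for the truncated product to stabilize (the bound $n\ge\lceil i/2\rceil$ above), and verifying that the low-degree corrections coming from $\prod_j(1-t^{2j})$ do not spoil the weighted-degree bound; both are elementary but fiddly. An alternative, more structural route would be to observe that $\{\Co[x_1,\dots,x_n]_i\}_n$ is a finitely generated FI$_\mathcal{W}$-module generated in degree $\le i$, hence so is its quotient $\{R_n^i\}_n$ by Noetherianity of FI$_\mathcal{W}$, and then invoke the general theorem that the characters of a finitely generated FI$_\mathcal{W}$-module are eventually given by a character polynomial of degree at most the generation degree — but the generating-function argument has the advantage of producing $Q_i$ explicitly.
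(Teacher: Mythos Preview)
Your proposal is correct, and it takes a genuinely different route from the paper. In the paper, Theorem~\ref{CharPol} is actually quoted from Wilson's FI$_\mathcal{W}$-module work and not reproved directly; what the paper does prove is the sharper Theorem~\ref{typBan}, and its path to the key generating function (Theorem~\ref{useful}) is arithmetic: it combines Lehrer's identity (Theorem~\ref{classf}(ii)) with the Springer--Steinberg count of $F$-stable maximal tori of a given type (Lemma~\ref{typeSp}), equates the two expressions for the number of tori of type $\sigma$, and then specializes $q^{-1}\mapsto z$. By contrast, you obtain the identical generating function purely from reflection-group theory---Chevalley's decomposition $\Co[V]\cong\Co[V]^{B_n}\otimes R_n^*$ plus the Molien-type computation of $\det(\mathrm{id}-t\rho(\sigma))$ cycle-by-cycle---and never touch finite groups of Lie type at all. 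Your argument is more elementary and self-contained for this particular statement; the paper's route, on the other hand, is the one that ties naturally into its main theme (statistics on maximal tori) and makes the Lehrer formula do double duty. Two small remarks: your stable range $n\ge\lceil i/2\rceil$ is correct but not sharp---the difference between the finite and infinite products first appears in degree $2n+2$, so in fact $\chi_{R_n^i}=Q_i$ already for $i\le 2n+1$, which is the bound in Theorem~\ref{typBan}; and the alternative FI$_\mathcal{W}$ route you sketch at the end is precisely Wilson's original proof that the paper cites.
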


Following a suggestion made by Chen, we obtain in Section \ref{MaxCha} a generating function for the $B_n$ characters of ${R_n^i}$.

\begin{theorem}[\textbf{A generating function for the $B_n$ characters of ${R_n^i}$}] \label{useful} Fix an integer $n\geq 1$. For all signed permutations $\sigma \in B_n$,
\[ \sum_{i=0}^{n^2} \chi_{R_n^i}(\sigma) z^i = \frac{(1-z^2)(1-z^4) \cdots (1-z^{2n})}{\prod_{r=1}^n (1-z^r)^{X_r(\sigma)} (1+z^r)^{Y_r(\sigma)}}. \]
\end{theorem}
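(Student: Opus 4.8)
The plan is to recognize the left-hand side as a graded equivariant trace and evaluate it using the classical Molien--Chevalley machinery for coinvariant algebras of finite reflection groups. Set $V = \Co^n$ with its standard $B_n$-action by permuting and negating the coordinates, so that $\Co[x_1,\dots,x_n] = \mathrm{Sym}(V^{*})$ and $R_n^{*} = \Co[x_1,\dots,x_n]/I_n$. The first step is the $B_n$-equivariant form of Chevalley's theorem (see, e.g., the standard references on reflection groups, such as Bourbaki or Humphreys): the multiplication map $\Co[x_1,\dots,x_n]^{B_n} \otimes_{\Co} \mathcal{H} \to \Co[x_1,\dots,x_n]$ is an isomorphism of graded $B_n$-modules, where $\mathcal{H}$ is any graded $B_n$-stable complement to $I_n$ (for instance the space of $B_n$-harmonic polynomials), $B_n$ acts trivially on the invariant ring, and $\mathcal{H} \cong R_n^{*}$ as graded $B_n$-modules. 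Taking the graded trace of a fixed $\sigma \in B_n$ on both sides, and using the elementary identity $\sum_{i \ge 0} \mathrm{tr}\bigl(\sigma \mid \mathrm{Sym}^i V^{*}\bigr)\, z^i = 1/\det\bigl(1 - z\sigma \mid V^{*}\bigr)$, one obtains
\[
\frac{1}{\det(1 - z\sigma \mid V^{*})} \;=\; \left(\prod_{j=1}^{n} \frac{1}{1 - z^{d_j}}\right) \left(\sum_{i=0}^{n^2} \chi_{R_n^i}(\sigma)\, z^i\right),
\]
where $d_1,\dots,d_n$ are the degrees of the fundamental invariants of $B_n$.

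For the hyperoctahedral group these degrees are $d_j = 2j$, since $\Co[x_1,\dots,x_n]^{B_n}$ is the polynomial ring on the elementary symmetric functions in $x_1^2,\dots,x_n^2$; hence $\prod_{j=1}^n (1 - z^{d_j}) = (1-z^2)(1-z^4)\cdots(1-z^{2n})$, which is exactly the numerator in the statement, and the top degree of $R_n^{*}$ is $\sum_j (d_j - 1) = n^2$, which is why the sum runs over $0 \le i \le n^2$. It remains to compute $\det(1 - z\sigma \mid V^{*})$. Since $\sigma$ is a signed permutation matrix, the eigenvalue multiset of $\sigma$ on $V^{*}$ equals that on $V$, so we may work with $V$. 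Decomposing $\sigma$ into its positive and negative signed cycles block-diagonalizes the $V$-action, so $\det(1 - z\sigma \mid V)$ is the product of the block determinants. A positive $r$-cycle acts as an $r \times r$ cyclic permutation matrix, with eigenvalues the $r$-th roots of unity, contributing $\prod_{\zeta^r = 1}(1 - z\zeta) = 1 - z^r$; a negative $r$-cycle acts as a signed cyclic matrix whose $r$-th power is $-\mathrm{Id}$ on its block, hence has eigenvalues the $r$ solutions of $\zeta^r = -1$, contributing $\prod_{\zeta^r = -1}(1 - z\zeta) = 1 + z^r$. Therefore $\det(1 - z\sigma \mid V) = \prod_{r=1}^{n} (1 - z^r)^{X_r(\sigma)} (1 + z^r)^{Y_r(\sigma)}$, and substituting into the displayed identity yields the theorem.

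The only step requiring genuine care is the equivariant refinement of Chevalley's theorem in the first paragraph --- that $\Co[x_1,\dots,x_n]$ is a free module over its $B_n$-invariants admitting a homogeneous basis that spans a $B_n$-submodule isomorphic to the coinvariant algebra --- which relies on working in characteristic zero so that each graded piece of $\Co[x_1,\dots,x_n]$ is a semisimple $B_n$-module. The eigenvalue computations for positive and negative cycles in the second paragraph are routine, and the fact that the denominator on the right-hand side divides $(1-z^2)\cdots(1-z^{2n})$ is automatic once one observes that the left-hand side is manifestly a polynomial of degree at most $n^2$. An alternative route, more in the spirit of the generating-function methods used elsewhere in the paper, would be to start from Lehrer's formula (Theorem~\ref{classf}(ii)) together with Steinberg's count of $F$-stable maximal tori of each signed cycle type and solve for the graded characters; but the reflection-group argument above is shorter and self-contained.
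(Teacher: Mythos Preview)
Your proof is correct but follows a genuinely different route from the paper. The paper's argument is arithmetic: it takes $\chi$ in Lehrer's formula (Theorem~\ref{classf}(ii)) to be the indicator of the conjugacy class of $\sigma$, equates the resulting expression with the explicit count of $F$-stable maximal tori of type $\sigma$ from Lemma~\ref{typeSp}, and then observes that the identity obtained for every prime power $q$ persists under the substitution $z=1/q$. In other words, the paper runs information \emph{from} the tori side \emph{to} the coinvariant side, which is consistent with its overall narrative. You instead give the classical reflection-group argument: the equivariant Chevalley decomposition $\Co[V]\cong \Co[V]^{B_n}\otimes R_n^{*}$ as graded $B_n$-modules, combined with Molien's formula and a direct eigenvalue computation for signed cycles. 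Your approach is shorter and entirely self-contained---it needs neither Lehrer's theorem nor any input from finite groups of Lie type---and you even flag the paper's actual method as an ``alternative route'' in your closing sentence. The trade-off is that the paper's proof illustrates the bidirectionality of Lehrer's identity (tori counts determine coinvariant characters as well as the reverse), which is thematically useful here, whereas your argument treats Theorem~\ref{useful} as a standalone fact about $B_n$.
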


Using this result we can recover Theorem \ref{CharPol}  and give a generating function for the polynomials $Q_i$. Chen and Specter \cite{CS} compute the  corresponding result in type $A$.

\begin{theorem}[\textbf{Character polynomials for $R^i_n$ in type B/C}] \label{typBan}
Define character polynomials $Q_i$  by the generating function
\begin{align*} \sum_{i =0}^{\infty} Q_i  \, t^i &= \prod_{k =1}^{\infty} \frac{(1-t^{2k})}{(1-t^k)^{X_k} (1+t^k)^{Y_k}}\\
&=\prod_{k=1}^{\infty} \left(\sum_{j=0}^\infty\binom{1-X_k}{j}(-t^k)^j \right) \left(\sum_{j=0}^\infty\binom{1-Y_k}{j}(t^k)^j\right).\end{align*}
Then the character of the $B_n$-representation $R_n^i$,  the $i^{th}$ graded piece of the coinvariant algebra $R_n^*$ in type $B/C$, is given by $$ \chi_{R_n^i} = Q_i \text{\   \   for all \   \  } i \leq 2n+1.$$
\end{theorem}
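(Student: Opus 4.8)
The plan is to deduce the theorem directly from Theorem \ref{useful} by comparing generating functions degree by degree. First I would verify that the $Q_i$ are genuinely well-defined character polynomials with the stated expansion. Rewriting each factor as
\[ \frac{1-t^{2k}}{(1-t^k)^{X_k}(1+t^k)^{Y_k}} = (1-t^k)^{1-X_k}(1+t^k)^{1-Y_k} = \left(\sum_{j\ge 0}\binom{1-X_k}{j}(-t^k)^j\right)\left(\sum_{j\ge 0}\binom{1-Y_k}{j}t^{kj}\right), \]
one sees that for each fixed power $t^i$ only the factors with $k\le i$ contribute, and each contributes a polynomial in $X_k,Y_k$; hence the coefficient $Q_i$ of $t^i$ lies in $\mathbb{Q}[X_1,Y_1,X_2,Y_2,\ldots]$, is supported on the variables $X_k,Y_k$ with $k\le i$, and has degree $\le i$ (recovering the degree bound of Theorem \ref{CharPol}).

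Next, fix $n\ge 1$ and a signed permutation $\sigma\in B_n$, and evaluate the defining product at $\sigma$ (this is legitimate since, after substituting the integer values $X_k(\sigma),Y_k(\sigma)$, everything takes place in $\mathbb{Q}[[t]]$ with the usual binomial series). The key observation is that a signed permutation of $\{1,\ldots,n\}$ has no cycle of length exceeding $n$, so $X_k(\sigma)=Y_k(\sigma)=0$ for all $k>n$. The denominator therefore collapses to a product over $k\le n$, giving
\[ \sum_{i=0}^{\infty} Q_i(\sigma)\, t^i = \left(\prod_{k=1}^{\infty}(1-t^{2k})\right)\cdot\prod_{k=1}^{n}\frac{1}{(1-t^k)^{X_k(\sigma)}(1+t^k)^{Y_k(\sigma)}}. \]
On the other hand, Theorem \ref{useful} gives exactly this expression with $\prod_{k=1}^{\infty}(1-t^{2k})$ replaced by the finite product $\prod_{k=1}^{n}(1-t^{2k})$. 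Dividing (the relevant power series have unit constant term), I obtain
\[ \sum_{i=0}^{\infty} Q_i(\sigma)\, t^i = \left(\prod_{k=n+1}^{\infty}(1-t^{2k})\right)\cdot\sum_{i=0}^{n^2}\chi_{R_n^i}(\sigma)\, t^i. \]

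Finally, since $\prod_{k=n+1}^{\infty}(1-t^{2k}) = 1 - t^{2n+2} + \cdots$ has all of its nonconstant terms in degree $\ge 2n+2$, the two power series $\sum_i Q_i(\sigma)\,t^i$ and $\sum_i \chi_{R_n^i}(\sigma)\,t^i$ agree in every degree $i\le 2n+1$. Hence $Q_i(\sigma)=\chi_{R_n^i}(\sigma)$ for all $i\le 2n+1$; as $\sigma\in B_n$ was arbitrary, this is the asserted equality of class functions on $B_n$. I do not anticipate a serious obstacle: the only point requiring care is the bookkeeping between the finite numerator $(1-z^2)(1-z^4)\cdots(1-z^{2n})$ in Theorem \ref{useful} and the infinite numerator $\prod_{k}(1-t^{2k})$ defining the $Q_i$ — recognizing that their discrepancy is invisible below degree $2n+2$ is precisely what pins down the range $i\le 2n+1$.
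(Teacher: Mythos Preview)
Your proof is correct and follows essentially the same approach as the paper: both use Theorem~\ref{useful}, the vanishing $X_k(\sigma)=Y_k(\sigma)=0$ for $k>n$, and the fact that $\prod_{k>n}(1-t^{2k})=1+O(t^{2n+2})$ to conclude that the two generating functions agree through degree $2n+1$. The only cosmetic difference is that you compare the two series via a ratio while the paper subtracts them and factors; your additional verification that the $Q_i$ are well-defined character polynomials of degree $\le i$ is a welcome bit of care the paper leaves implicit.
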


Equivalently, in fixed degree $i$, the character of the group $R_n^i$ is given by the character polynomial $Q_i$ for all $n \geq \lceil \frac{i-1}{2} \rceil$.  Theorem \ref{typBan} is proved in Section \ref{Sp}.

\begin{remark} Theorem \ref{typBan} significantly improves upon Wilson's result Theorem \ref{CharPol}: not only does  Theorem \ref{typBan} present an explicit generating function for the character polynomials, it also gives a quantitative range  $i\leq 2n+1$ for when the equality $ \chi_{R_n^i} = Q_i$ holds. The bound $ i \leq 2n+1$ is sharp; see Remark \ref{CoinvariantSharp}. Moreover, from the form of the generating function we can infer  that the polynomial $Q_i$ includes the terms $X_i - Y_i$, hence $\deg(Q_i)=i$. Thus Theorem \ref{typBan}  implies that the bound on the degree of the character polynomials $Q_i$ in Theorem \ref{CharPol} is also sharp.
\end{remark}


Expanding the generating function in Theorem \ref{typBan}, we recover and extend the list of explicit character polynomials given by Wilson \cite[Section 6]{W}.
\begin{align*}
 1&\;  + \; \Big(X_1 - Y_1 \Big) t
  \;  \\
  & + \;  \Big(-1 +\textstyle\frac12 X_1+ \textstyle\frac12 Y_1 + \textstyle\frac12 X_1^2 + \textstyle\frac12 Y_1^2-X_1 Y_1+ X_2-Y_2  \Big) t^2   \\
+  \Big( &- \textstyle\frac23 X_1 + \textstyle\frac23 Y_1 + \textstyle\frac12 X_1^2-\textstyle\frac12 Y_1^2 +\textstyle\frac16 X_1^3 - \textstyle\frac16 Y_1^3 - \textstyle\frac12 X_1^2 Y_1 \\
& +X_1X_2+Y_1Y_2   + \textstyle\frac12 X_1Y_1^2-X_2Y_1-X_1Y_2+X_3-Y_3\Big)t^3
 \\
+\Big(& -1-\textstyle\frac14 X_1 -\textstyle\frac14 Y_1
-\textstyle\frac1{24}  X_1^2 -\textstyle\frac1{24}  Y_1^2   +\textstyle\frac7{12} X_1Y_1 +\textstyle\frac32 Y_2-\textstyle\frac12X_2  +\textstyle\frac14 X_1^3 
\\ &  \Big.
+ \textstyle\frac14 Y_1^3  -\textstyle\frac14 X_1^2Y_1 - \textstyle\frac14 X_1Y_1^2 +\textstyle\frac12 X_1X_2-\textstyle\frac12 X_1Y_2  -\textstyle\frac12Y_1Y_2 +\textstyle\frac12X_2Y_1
\\ & \Big.
+\textstyle\frac1{24}  X_1^4  +\textstyle\frac1{24}  Y_1^4  -\textstyle\frac16 X_1^3Y_1  -\textstyle\frac16 X_1Y_1^3 +\textstyle\frac14 X_1^2Y_1^2  +\textstyle\frac12 X_1^2X_2 -\textstyle\frac12 X_1^2Y_2
  \\ & \Big.
 +\textstyle\frac12 X_2Y_1^2
  -\textstyle\frac12 Y_1^2Y_2  -\textstyle\frac12 Y_1^2Y_2   -X_1X_2Y_1+X_1Y_1Y_2   +\textstyle\frac12 X_2^2 
 +\textstyle\frac12 Y_2^2  
 \\& 
+X_1X_3 -X_1Y_3  -X_3Y_1 +Y_1Y_3  -X_2Y_2  +X_4-Y_4 \Big)t^4 + \cdots
\end{align*} 

\subsection{Stable twisted Betti numbers}
\begin{definition}[{\bf Twisted Betti numbers in type B/C}]
Let $T(n,\Co)$ denote the space of maximal tori in $\Sp_{2n}(\Co)$ or (equivalently) $\SO_{2n+1}(\Co)$. For a $B_n$-representation   $V_n$, the \emph{twisted Betti numbers of $T(n,\Co)$ in $V_n$} are the values
$$\dim_\Co H^{2i}(T(n,\Co),V_n)=\langle R^i_n,V_n\rangle_{B_n}.$$
For a virtual $B_n$-representation $V_n$, we again call the values $\langle R^i_n,V_n\rangle_{B_n} 
\in \Co$ the twisted Betti numbers of $T(n,\Co)$ in $V_n$. 
\end{definition}

 Theorem \ref{typBan} and Jim\'enez Rolland--Wilson \cite[Proposition 3.1]{RW} imply that  if $P$ is any hyperoctahedral character polynomial, then the inner product $\langle P, R_n^i \rangle_{B_n}$ is independent of $n$ for  $n \geq \deg(P) + i$. We conclude the following theorem.

\begin{cor}[\textbf{Twisted homological stability for complex tori}]\label{TWISTED} Let $P$ be a hyperoctahedral character polynomial and $V^P_n$ the associated sequence of virtual representations of $B_n$. For every $i\geq 0$ and all  $n \geq \deg(P) + i$,
$$ \langle V^P_n, R_n^i \rangle_{B_n} = \langle V^P_{n+1}, R_{n+1}^i \rangle_{B_{n+1}}.$$
For genuine representations $V^P_n$ this implies in particular
$$ \dim_{\Co} \big(H^{2i}(T(n,\Co),V^P_n)\big)=\dim_{\Co} \big(H^{2i}(T(n+1,\Co),V^P_{n+1})\big).$$
\end{cor}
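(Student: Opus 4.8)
The plan is to derive the corollary directly from Theorem \ref{typBan} together with the stability of inner products of hyperoctahedral character polynomials established in Jim\'enez Rolland--Wilson \cite[Proposition 3.1]{RW}. First I would unwind the definitions: the virtual representation $V^P_n$ is, by construction, the one whose $B_n$-character is the class function obtained by evaluating the character polynomial $P$ on signed cycle types in $B_n$. Hence $\langle V^P_n, R_n^i\rangle_{B_n}$ is precisely the inner product $\langle P, \chi_{R_n^i}\rangle_{B_n}$ of class functions on $B_n$.

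Next I would replace the character $\chi_{R_n^i}$ by the fixed character polynomial $Q_i$. By Theorem \ref{typBan} the equality $\chi_{R_n^i} = Q_i$ holds, as class functions on $B_n$, whenever $i \le 2n+1$; and by the remark following that theorem $\deg(Q_i) = i$ (in any event, $\deg(Q_i) \le i$ is all that will be needed). Therefore, for every $n$ with $i \le 2n+1$,
\[ \langle V^P_n, R_n^i\rangle_{B_n} = \langle P, Q_i\rangle_{B_n}. \]
Now I would invoke \cite[Proposition 3.1]{RW}: the standard inner product $\langle P, Q_i\rangle_{B_n}$ of two hyperoctahedral character polynomials is independent of $n$ once $n \ge \deg(P) + \deg(Q_i)$. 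Since $\deg(Q_i) \le i$, the right-hand side above is independent of $n$ for all $n \ge \deg(P) + i$.

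The only point requiring any care is to check that the range $n \ge \deg(P) + i$ lies inside the range $i \le 2n+1$ in which Theorem \ref{typBan} applies; this is immediate, since $n \ge \deg(P) + i \ge i$ forces $2n+1 \ge 2i+1 > i$. Consequently, for every $i \ge 0$ and every $n \ge \deg(P) + i$ both reductions above are legitimate, and $\langle V^P_n, R_n^i\rangle_{B_n} = \langle P, Q_i\rangle_{B_n} = \langle V^P_{n+1}, R_{n+1}^i\rangle_{B_{n+1}}$, which is the first assertion. For the second assertion, suppose $V^P_n$ is a genuine $B_n$-representation; then the defining identity $\dim_{\Co} H^{2i}(T(n,\Co),V^P_n) = \langle R^i_n, V^P_n\rangle_{B_n}$, together with the symmetry of the standard inner product of the $\Q$-valued class functions $\chi_{R_n^i}$ and $P$, gives $\dim_{\Co} H^{2i}(T(n,\Co),V^P_n) = \langle V^P_n, R_n^i\rangle_{B_n}$, and the claimed equality of twisted Betti numbers then follows from the first assertion.

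I do not expect a serious obstacle here: the substantive content is entirely contained in Theorem \ref{typBan} and in \cite[Proposition 3.1]{RW}, and assembling the corollary is a bookkeeping exercise with the two stable ranges. What makes the effective bound $n \ge \deg(P) + i$ possible is precisely the sharp range $i \le 2n+1$ in Theorem \ref{typBan}; with only Wilson's qualitative Theorem \ref{CharPol} one would still obtain twisted homological stability, but without an explicit stable range.
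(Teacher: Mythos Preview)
Your proposal is correct and follows essentially the same approach as the paper: invoke Theorem~\ref{typBan} to replace $\chi_{R_n^i}$ by the character polynomial $Q_i$ in the range $i \le 2n+1$, then apply \cite[Proposition 3.1]{RW} together with $\deg(Q_i) \le i$ to conclude stability for $n \ge \deg(P)+i$. Your explicit verification that $n \ge \deg(P)+i$ forces $i \le 2n+1$ is exactly the compatibility check the paper alludes to in the remark following the corollary.
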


 We remark that the stable range in Corollary \ref{TWISTED} relies on the result  of Theorem \ref{typBan} that the character of $R^i_n$ agrees with its character polynomial  for all $i \leq 2n+1$, and in particular for all $i \leq n$.

If $P$ is the character polynomial associated to a sequence of irreducible $B_n$-representations (as constructed in Wilson \cite[Theorem 4.11]{W2}), Corollary \ref{TWISTED} can be interpreted as the statement that the cohomology groups of the flag varieties are representation stable in the sense of Church--Farb \cite[Definition 1.1]{CF}, a consequence of Wilson \cite[Corollary 6.5]{W}.

Finally,  in Section \ref{Sp} we prove a type $B/C$ analog of  results of  Chen \cite[Theorem 1 (II) and Corollary 2 (II)]{Che}. First  we give a double generating function for the twisted Betti numbers of $T(n,\Co)$. 

\begin{theorem}[\textbf{The twisted Betti numbers $\left\langle {X \choose \mu} {Y \choose \lambda},R_n^i \right\rangle_{B_n}$}] \label{analog1} Fix a double partition $(\mu, \lambda)$.
Let $$\beta_i(n) = \left\langle {X \choose \mu} {Y \choose \lambda},R_n^i \right\rangle_{B_n}.$$
Then
\begin{align*} &  \sum_{n=0}^{\infty} \sum_{i =0}^{n^2} \frac{\beta_i(n) z^i u^n}{(1-z^2)(1-z^4) \cdots (1-z^{2n})} \\
=& \frac{1}{v_{\mu} v_{\lambda}} \prod_{r=1}^{|\mu|} \left( \frac{u^r}{1-z^r} \right)^{n_r(\mu)}
\prod_{r=1}^{|\lambda|} \left( \frac{u^r}{1+z^r} \right)^{n_r(\lambda)}
\prod_{r =1}^{\infty} \frac{1}{1-uz^{2r-2}} .
\end{align*}
\end{theorem}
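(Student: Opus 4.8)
The plan is to combine the generating function for the characters $\chi_{R_n^i}$ from Theorem \ref{useful} with the $B_n$-inner product formula and sum over all $n$. First I would use Theorem \ref{useful} to write
\[
\sum_{i=0}^{n^2}\frac{\chi_{R_n^i}(\sigma)\, z^i}{(1-z^2)(1-z^4)\cdots(1-z^{2n})}
= \prod_{r=1}^n (1-z^r)^{-X_r(\sigma)}(1+z^r)^{-Y_r(\sigma)},
\]
so that the left-hand side of the theorem becomes
\[
\sum_{n=0}^\infty u^n \Big\langle {X \choose \mu}{Y\choose\lambda}(\sigma),\ \prod_{r=1}^n (1-z^r)^{-X_r(\sigma)}(1+z^r)^{-Y_r(\sigma)}\Big\rangle_{B_n},
\]
where the inner product is taken over $\sigma\in B_n$ and I am abusing notation by writing the (class) function of $\sigma$ inside the bracket. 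Expanding this inner product as a sum over signed cycle types, the $n$-th term is
\[
u^n \sum_{(\nu,\rho):\,|\nu|+|\rho|=n}\frac{1}{v_\nu v_\rho}\ {X\choose\mu}{Y\choose\lambda}(\nu,\rho)\ \prod_{r\ge1}(1-z^r)^{-n_r(\nu)}(1+z^r)^{-n_r(\rho)},
\]
using that the proportion of $B_n$ of signed cycle type $(\nu,\rho)$ is $\frac{v_\nu v_\rho}{2^n n!}$ together with the definition of $v_\mu$.

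Next I would recognize the double sum over $n$ and over signed cycle types $(\nu,\rho)$ as an exponential-type generating function that factors over cycle lengths. Each positive $r$-cycle contributes a variable-tracked weight $\frac{u^r}{1-z^r}$ and each negative $r$-cycle contributes $\frac{u^r}{1+z^r}$; the binomial coefficients ${X_r \choose n_r(\mu)}$ and ${Y_r\choose n_r(\lambda)}$ in the character polynomial select, for each $r$, exactly $n_r(\mu)$ of the positive $r$-cycles and $n_r(\lambda)$ of the negative $r$-cycles to be ``marked.'' The standard exponential-formula bookkeeping (analogous to Fulman \cite{F} and Chen \cite{Che} in type $A$, but now with the hyperoctahedral normalization $v_\nu v_\rho$ in place of $z_\nu$) gives a product over all $r\ge1$ of factors
\[
\exp\!\Big(\tfrac{1}{2r}\Big(\tfrac{u^r}{1-z^r}+\tfrac{u^r}{1+z^r}\Big)\Big)\cdot(\text{correction from marked cycles}),
\]
and I would simplify $\frac{1}{2r}\big(\frac{u^r}{1-z^r}+\frac{u^r}{1+z^r}\big)=\frac{u^r}{r(1-z^{2r})}$. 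The unmarked-cycle product then telescopes, via $\sum_{r\ge1}\frac{u^r}{r(1-z^{2r})}=\sum_{r\ge1}\sum_{j\ge0}\frac{u^r z^{2rj}}{r}=-\sum_{j\ge0}\log(1-uz^{2j})$, into $\prod_{j\ge0}(1-uz^{2j})^{-1}=\prod_{r\ge1}(1-uz^{2r-2})^{-1}$, matching the last factor on the right-hand side. The marked cycles of length $r$ in $\mu$ contribute $\frac{1}{n_r(\mu)!}\big(\frac{u^r}{1-z^r}\big)^{n_r(\mu)}$ and those in $\lambda$ contribute $\frac{1}{n_r(\lambda)!}\big(\frac{u^r}{1+z^r}\big)^{n_r(\lambda)}$; collecting the factorials $\prod_r n_r(\mu)!\,(2r)^{n_r(\mu)}$ and $\prod_r n_r(\lambda)!\,(2r)^{n_r(\lambda)}$ and absorbing the $(2r)$'s into the marked weights reproduces exactly the prefactor $\frac{1}{v_\mu v_\lambda}$ times $\prod_r\big(\frac{u^r}{1-z^r}\big)^{n_r(\mu)}\prod_r\big(\frac{u^r}{1+z^r}\big)^{n_r(\lambda)}$.

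I expect the main obstacle to be the careful bookkeeping in the exponential-formula step: one must correctly disentangle the ``marked'' cycles (those selected by the binomial coefficients in the character polynomial, whose multiplicities are pinned to $n_r(\mu)$ and $n_r(\lambda)$) from the ``free'' cycles (summed over all multiplicities), and verify that the factors of $2r$ arising from $v_\nu v_\rho$ distribute correctly so that marked cycles land with denominator $n_r(\mu)!$ (not $v_{\text{marked}}$) while free cycles assemble into the clean product $\prod_{r\ge1}(1-uz^{2r-2})^{-1}$. A clean way to organize this is to first prove the identity for the single character polynomial $\prod_r X_r^{m_r} Y_r^{\ell_r}$ replaced by $1$ (i.e. the case $\mu=\lambda=\emptyset$, giving just the telescoping product), and then introduce the markings one cycle length at a time; alternatively, one can mimic the derivation in \cite{Che} verbatim, substituting the signed-permutation weights. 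The algebraic simplification $\frac{1}{1-z^r}+\frac{1}{1+z^r}=\frac{2}{1-z^{2r}}$ is what makes the free part collapse to a product indexed only by even exponents, so I would isolate that identity early. Everything after the combinatorial setup is routine manipulation of formal power series.
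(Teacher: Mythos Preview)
Your approach is correct and yields the theorem, but it is organized differently from the paper's proof. The paper evaluates the left-hand side at $z=1/q$ for a prime power $q$, uses Lehrer's identity (Theorem~\ref{classf}(ii)) to convert $\sum_i \beta_i(n) q^{-i}$ into a sum over $F$-stable maximal tori, and then invokes Theorem~\ref{basic} (itself proved by differentiating the master generating function of Theorem~\ref{Spgen}) to evaluate that tori sum; finally it argues that since equality holds for every prime power $q$, one may replace $q^{-1}$ by a formal variable $z$. You instead start from Theorem~\ref{useful}, expand the $B_n$-inner product as a sum over signed cycle types, and apply the exponential formula directly, using the identity $\sum_{a\ge 0}\frac{w^a}{a!}\binom{a}{m}=\frac{w^m}{m!}e^w$ to separate the ``marked'' cycles from the free ones; the simplification $\frac{1}{1-z^r}+\frac{1}{1+z^r}=\frac{2}{1-z^{2r}}$ then collapses the free part exactly as you describe. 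Your route is a bit more hands-on but has the advantage of working entirely with formal power series in $z$, avoiding the ``true for all prime powers, hence formally'' step; the paper's route is shorter because the differentiation bookkeeping has already been packaged into Theorem~\ref{basic}. One small wording fix: in your marked-cycle accounting the factor coming from positive $r$-cycles is $\frac{1}{n_r(\mu)!}\bigl(\frac{u^r}{2r(1-z^r)}\bigr)^{n_r(\mu)}$, so the $(2r)$'s sit inside the power and combine with the $n_r(\mu)!$ to give exactly $1/v_\mu$, as you ultimately claim.
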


We obtain a generating function for the stable twisted Betti numbers  of $T(n,\Co)$. From the form of this generating function we can deduce properties of these sequences such as linear recurrence relationships and \emph{quasipolynomiality}.

\begin{cor}[\textbf{Generating functions for stable Betti numbers}]\label{BettiOGF} Given a double partition $(\mu, \lambda)$, let $$ \beta_i = \lim_{n \rightarrow \infty} \left\langle {X \choose \mu} {Y \choose \lambda} ,R_n^i  \right\rangle_{B_n},$$
where $R_n^i$ is  the $i^{th}$ graded piece of the coinvariant algebra $R_n^*$ in type $B/C$. Then \[\sum_{i = 0}^{\infty} \beta_i z^i = \frac{1}{v_{\mu} v_{\lambda}} \prod_{r=1}^{|\mu|} \left( \frac{1}{1-z^r} \right)^{n_r(\mu)} \prod_{r=1}^{|\lambda|} \left( \frac{1}{1+z^r} \right)^{n_r(\lambda)}.\]
\end{cor}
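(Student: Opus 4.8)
The plan is to derive Corollary~\ref{BettiOGF} from Theorem~\ref{analog1} by taking the limit as $n\to\infty$, specifically by extracting the behavior of the coefficient of $u^n$ in the double generating function as $n$ grows. Write $F(z,u)$ for the right-hand side of Theorem~\ref{analog1}. The key structural observation is that the only factor in $F(z,u)$ carrying infinitely many powers of $u$ is $\prod_{r\ge 1}\frac{1}{1-uz^{2r-2}}$, and at $z$ fixed (thought of as a formal variable, or $|z|<1$) this factor, expanded in $u$, has coefficients that are themselves power series in $z$ whose low-order behavior stabilizes. Concretely, the first factor of that product is $\frac{1}{1-u}$, which contributes $1$ to the coefficient of every $u^n$; every other factor $\frac{1}{1-uz^{2r-2}}$ with $r\ge 2$ contributes only terms divisible by $z^{2r-2}\ge z^2$ when it supplies at least one power of $u$. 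So the coefficient of $u^n$ in $\prod_{r\ge1}\frac{1}{1-uz^{2r-2}}$, as a power series in $z$, agrees to higher and higher order with $1$ as $n\to\infty$; more precisely it equals $1$ modulo $z^{2}$ for $n\ge 1$, and in general equals the coefficient of $u^n$ in the full product only up to an error that is $O(z^{2(n+1)})$ away from what one gets by letting $n\to\infty$ inside the product — i.e. the limit of $[u^n]\prod_{r\ge1}\frac{1}{1-uz^{2r-2}}$ coefficient-wise in $z$ is $1$.

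The next step is to assemble this. On the left-hand side of Theorem~\ref{analog1}, the coefficient of $u^n$ is $\sum_{i=0}^{n^2}\frac{\beta_i(n)z^i}{(1-z^2)\cdots(1-z^{2n})}$. Multiplying both sides by the denominator $(1-z^2)(1-z^4)\cdots(1-z^{2n})$ and taking $[u^n]$, we get
\[
\sum_{i=0}^{n^2}\beta_i(n)\,z^i
= (1-z^2)(1-z^4)\cdots(1-z^{2n})\cdot [u^n]\,F(z,u).
\]
From Theorem~\ref{analog1}'s explicit form, $[u^n]F(z,u)$ equals $\frac{1}{v_\mu v_\lambda}\prod_r(\tfrac{1}{1-z^r})^{n_r(\mu)}\prod_r(\tfrac{1}{1+z^r})^{n_r(\lambda)}$ times $[u^{n-|\mu|-|\lambda|}]\big(\prod_{r\ge1}\tfrac{1}{1-uz^{2r-2}}\big)$ (the $u^r$ factors in $F$ just shift the exponent by $|\mu|+|\lambda|$). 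By the observation above, as $n\to\infty$ this last factor tends coefficient-wise (in $z$) to $1$. Meanwhile the denominator $(1-z^2)(1-z^4)\cdots(1-z^{2n})$ tends coefficient-wise to the infinite product $\prod_{r\ge1}(1-z^{2r})$. Since $\beta_i(n)\to\beta_i$ by definition of $\beta_i$ and by Corollary~\ref{TWISTED} (the left side stabilizes in $n$ for each fixed $i$), the generating function identity passes to the limit: $\sum_{i\ge0}\beta_i z^i = \prod_{r\ge1}(1-z^{2r})\cdot\frac{1}{v_\mu v_\lambda}\prod_r(\tfrac{1}{1-z^r})^{n_r(\mu)}\prod_r(\tfrac{1}{1+z^r})^{n_r(\lambda)}\cdot 1$. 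Wait — this still has the $\prod(1-z^{2r})$ factor, so I need to check it cancels; indeed it should not appear in the final answer, which suggests the correct bookkeeping is to compare the generating functions \emph{with} the denominator already divided out, i.e. to observe that for each fixed $i$, $[z^i]$ of the left side of Theorem~\ref{analog1}'s summand (before dividing) stabilizes, and the stable value is precisely $[z^iu^n]F(z,u)$ in the appropriate limiting sense.

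Let me restate the cleaner route, which I expect to be the actual argument: rather than clearing denominators, keep the normalized Betti numbers on the left. Define $\widetilde\beta_i(n)=[z^i]\sum_j \frac{\beta_j(n)z^j}{(1-z^2)\cdots(1-z^{2n})}$; this is a ``$q$-analogue-weighted'' partial sum of the $\beta_j(n)$. By Theorem~\ref{analog1}, $\sum_{n}\widetilde\beta_i(n)u^n = [z^i]F(z,u)$ for each $i$, and reading off $[u^n]$ and then letting $n\to\infty$ (using that $[u^n][z^i]F(z,u)$ stabilizes because the $\frac{1}{1-uz^{2r-2}}$ factors with $r\ge 2$ contribute to $[z^i]$ only finitely often, bounded in terms of $i$) gives a limiting relation; then invert the limiting denominator $\prod_{r\ge1}(1-z^{2r})$, which is exactly $\prod_{r=1}^\infty(1-z^{2r})$, to solve for $\sum\beta_i z^i$. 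The main obstacle — and the step deserving the most care — is making the interchange of limits rigorous: justifying that $[z^i u^n]F(z,u)$ is eventually constant in $n$ (for fixed $i$), that this eventual value is the ``stable'' coefficient, and that dividing by the now-infinite product $\prod(1-z^{2r})$ is legitimate coefficient-by-coefficient. All of this is elementary formal-power-series manipulation once one notes that in $F(z,u)$ every factor beyond $\frac{1}{1-u}$ in the tail product is divisible by $z^2$ (so only finitely many of them affect any fixed coefficient), but it must be written out carefully to land exactly on the clean product formula $\frac{1}{v_\mu v_\lambda}\prod_r(1-z^r)^{-n_r(\mu)}\prod_r(1+z^r)^{-n_r(\lambda)}$ with no stray factors.
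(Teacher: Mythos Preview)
Your overall strategy---extracting $[u^n]$ from both sides of Theorem~\ref{analog1} and letting $n\to\infty$---is sound and is what the paper does too. But you make one concrete computational error that is the source of your ``stray factor'' confusion: you claim that the $z$-adic limit of $[u^m]\prod_{r\ge1}(1-uz^{2r-2})^{-1}$ is $1$. It is not. Writing this product as $\tfrac{1}{1-u}\,h(u)$ with $h(u)=\prod_{r\ge2}(1-uz^{2r-2})^{-1}$, one has $[u^m]\tfrac{h(u)}{1-u}=\sum_{k=0}^m [u^k]h(u)$; since $[u^k]h(u)$ is divisible by $z^{2k}$, this partial sum converges $z$-adically to $h(1)=\prod_{r\ge2}(1-z^{2r-2})^{-1}=\prod_{s\ge1}(1-z^{2s})^{-1}$. (Concretely: for every $m\ge1$ the coefficient of $z^2$ in $[u^m]\prod_r(1-uz^{2r-2})^{-1}$ is $1$, not $0$.) With this correction, your displayed identity
\[
\sum_{i=0}^{n^2}\beta_i(n)\,z^i=(1-z^2)\cdots(1-z^{2n})\cdot[u^n]F(z,u)
\]
has right-hand side converging $z$-adically to
\[
\prod_{s\ge1}(1-z^{2s})\;\cdot\;\frac{1}{v_\mu v_\lambda}\prod_r(1-z^r)^{-n_r(\mu)}\prod_r(1+z^r)^{-n_r(\lambda)}\;\cdot\;\prod_{s\ge1}(1-z^{2s})^{-1},
\]
the two infinite products cancel, and you land on the claimed formula with no detour through a ``cleaner route.''

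For comparison, the paper argues analytically rather than formally: it treats $z$ as a complex number in the open unit disk, computes the \emph{pointwise} limit $f(z)=\lim_n f_n(z)$ via Lemma~\ref{tay}, separately identifies the \emph{$z$-adic} limit $g(z)$ using the stabilization of $\beta_i(n)$, and then must argue $f=g$ by establishing uniform convergence on a closed disk (via Dini's theorem) and invoking the Cauchy integral formula. Your purely formal argument, once the error above is fixed, bypasses this analytic layer entirely; all that is needed is that multiplication is continuous in the $z$-adic topology and that $\beta_i(n)$ stabilizes (Corollary~\ref{TWISTED}).
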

Since character polynomials of the form ${X \choose \mu} {Y \choose \lambda}$ form an additive basis for the space of all hyperoctahedral character polynomials, the result of Corollary \ref{BettiOGF} is sufficient to derive a generating function for the twisted stable Betti numbers associated to any given character polynomial. Because the generating function in Corollary \ref{BettiOGF} is rational with denominator of degree $|\mu|+|\lambda|$, we obtain the following consequence.

\begin{cor}[\textbf{Linear recurrence for stable Betti numbers}]\label{RECURR} Given a hyperoctahedral character polynomial $P$,  let
$$ \beta_i = \lim_{n \rightarrow \infty} \langle P,R_n^i  \rangle_{B_n},$$
where $R_n^i$ is  the $i^{th}$ graded piece of the coinvariant algebra $R_n^*$ in type $B/C$. Then  there exist
integers $d_1,\cdots,d_N$ such that for all $i \geq N$,
\[ \beta_i = d_1 \beta_{i-1} + d_2 \beta_{i-2} + \cdots + d_N \beta_{i-N}.\]
When $P = {X \choose \mu} {Y \choose \lambda} $, then $N=\deg(P)$. In general $N \leq 2(\deg(P))^2$.
\end{cor}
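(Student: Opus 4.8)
The plan is to derive the corollary entirely from Corollary~\ref{BettiOGF} and the classical dictionary between rational generating functions and constant-coefficient linear recurrences: if a power series equals $p(z)/q(z)$ with $q(0)\neq 0$ and $q(z)=1-d_1z-\cdots-d_Nz^N$, then its coefficients satisfy $\beta_i=d_1\beta_{i-1}+\cdots+d_N\beta_{i-N}$ for every $i>\deg p$, and the $d_j$ are integers as soon as $q$ has integer coefficients.

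First I would handle the basis case $P=\binom{X}{\mu}\binom{Y}{\lambda}$. By Corollary~\ref{BettiOGF},
\[
\sum_{i\ge 0}\beta_i z^i=\frac{1}{v_\mu v_\lambda}\cdot\frac{1}{Q_{\mu,\lambda}(z)},\qquad Q_{\mu,\lambda}(z):=\prod_{r=1}^{|\mu|}(1-z^r)^{n_r(\mu)}\prod_{r=1}^{|\lambda|}(1+z^r)^{n_r(\lambda)} .
\]
The denominator $Q_{\mu,\lambda}$ lies in $\mathbb{Z}[z]$, has constant term $1$, and has degree $\sum_r r\,n_r(\mu)+\sum_r r\,n_r(\lambda)=|\mu|+|\lambda|=\deg(P)=:N$. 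Clearing the denominator makes the right-hand side the constant $1/(v_\mu v_\lambda)$, so writing $Q_{\mu,\lambda}(z)=1-d_1z-\cdots-d_Nz^N$ and reading off the coefficient of $z^i$ for each $i\ge N$ gives exactly $\beta_i=d_1\beta_{i-1}+\cdots+d_N\beta_{i-N}$. (If $\deg P=0$ then $P$ is constant and $\beta_i=P\cdot[i=0]$, and the statement is trivial.) This settles the basis case with $N=\deg(P)$.

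Next I would reduce the general case to this one. Because the $\binom{X}{\mu}\binom{Y}{\lambda}$ are an additive basis for hyperoctahedral character polynomials, write $P=\sum_{(\mu,\lambda)}c_{\mu,\lambda}\binom{X}{\mu}\binom{Y}{\lambda}$ as a finite sum in which every contributing $(\mu,\lambda)$ has $|\mu|+|\lambda|\le d:=\deg(P)$. Linearity of $\langle -,R_n^i\rangle_{B_n}$ and of the limit, with Corollary~\ref{BettiOGF}, gives $\sum_i\beta_i z^i=\sum_{(\mu,\lambda)}c_{\mu,\lambda}\,v_\mu^{-1}v_\lambda^{-1}\,Q_{\mu,\lambda}(z)^{-1}$. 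Put this over the common denominator $Q(z):=\prod_{r=1}^{d}(1-z^r)^{a_r}(1+z^r)^{b_r}$ with $a_r:=\max n_r(\mu)$ and $b_r:=\max n_r(\lambda)$ taken over the contributing pairs; then $Q\in\mathbb{Z}[z]$, $Q(0)=1$, each $Q_{\mu,\lambda}$ divides $Q$, and $\sum_i\beta_i z^i=p(z)/Q(z)$ for a polynomial $p$. Writing $Q(z)=1-d_1z-\cdots-d_Nz^N$ with $N=\deg Q$ and integer $d_j$, and comparing coefficients, the recurrence holds for $i$ beyond $\deg p$; one checks $\deg p\le N$ in general and $\deg p\le N-1$ after splitting off the degree-zero contribution coming from the constant term of $P$, so the recurrence is valid for all $i\ge N$. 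The bound on $N$ follows from $N=\deg Q=\sum_{r=1}^{d}r(a_r+b_r)$ together with $a_r,b_r\le\lfloor d/r\rfloor$ (forced by $r\,n_r(\mu)\le|\mu|\le d$), whence $N\le\sum_{r=1}^{d}2r\lfloor d/r\rfloor\le 2d^2=2(\deg P)^2$.

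I do not expect a genuine obstacle: Corollary~\ref{BettiOGF} already hands us the exact rational generating function, and the rest is the standard rational-function-to-recurrence argument plus linear algebra. The only delicate point is the bookkeeping---tracking $\deg p$ precisely enough to guarantee that the recurrence holds starting exactly at $i=N$ rather than at some marginally larger index, and checking the elementary inequality $\sum_{r=1}^{d}2r\lfloor d/r\rfloor\le 2d^2$ that yields the stated bound on $N$.
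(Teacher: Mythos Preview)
Your proposal is correct and follows essentially the same approach as the paper: both arguments invoke Corollary~\ref{BettiOGF} for the basis polynomials $\binom{X}{\mu}\binom{Y}{\lambda}$, then write a general $P$ as a linear combination, pass to a common denominator, and bound its degree via $n_r(\mu),n_r(\lambda)\le \deg(P)/r$. You are simply more explicit than the paper about the rational-function--to--recurrence dictionary (integer coefficients, constant term $1$, where the recurrence begins), which is fine.
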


To state the next consequence of  Corollary \ref{BettiOGF}, we recall the definition of a quasipolynomial from Stanley \cite[Section 4.4]{St1}.

\begin{definition}[\textbf{Quasipolynomials; quasiperiods}] \label{DefnQuasipolynomial} A function $p(t)$ is \emph{quasipolynomial of degree $d$} if it can be expressed in the form $$p(t) = c_d(t) t^d + c_{d-1}(t) t^{d-1} + \cdots + c_1(t) t + c_0(t) $$ where the coefficients $c_i(t)$ are periodic functions of $t$ with integer periods, and $c_d(t)$ is not identically zero. Equivalently, the function $p(t)$ is quasipolynomial if for some $M \geq 1$ there are polynomials $p_0(t), p_1(t), \ldots, p_{M-1}(t)$, such that $$p(t) = p_i(t) \qquad \text{for } t \equiv i \pmod{M}.$$ The integer $M$ is called a \emph{quasiperiod} of $p(t)$.
\end{definition}

\begin{cor}[\textbf{Stable Betti numbers are quasipolynomial}] \label{quasipolynomiality}
Given a hyperoctahedral character polynomial $P$,  again let
$$ \beta_i = \lim_{n \rightarrow \infty} \langle P,R_n^i  \rangle_{B_n},$$
where $R_n^i$ is  the $i^{th}$ graded piece of the coinvariant algebra $R_n^*$ in type $B/C$. Then for $i \geq 1$ the Betti numbers $ \beta_i$ are quasipolynomial in $i$, with degree at most $\deg(P)-1$, and have a quasiperiod at most the least common multiple of $\{ 2k \; | \; 1 \leq k \leq deg(P)\}$. If the character polynomial $P$ has constant term zero, then the above statement holds for all $i \geq 0$. 
\end{cor}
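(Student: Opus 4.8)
The plan is to extract the quasipolynomiality of $\beta_i$ directly from the generating function supplied by Corollary \ref{BettiOGF}, which tells us that $\sum_{i\geq 0}\beta_i z^i$ is a rational function of $z$. It suffices to treat $P = {X\choose\mu}{Y\choose\lambda}$, since these span the space of hyperoctahedral character polynomials and a finite linear combination of quasipolynomials is again quasipolynomial (with degree the max of the degrees and quasiperiod the lcm of the quasiperiods). So fix $(\mu,\lambda)$ and write
\[
F(z) \;=\; \sum_{i=0}^{\infty}\beta_i z^i \;=\; \frac{1}{v_\mu v_\lambda}\prod_{r=1}^{|\mu|}\Big(\frac{1}{1-z^r}\Big)^{n_r(\mu)}\prod_{r=1}^{|\lambda|}\Big(\frac{1}{1+z^r}\Big)^{n_r(\lambda)}.
\]
The key structural fact, from Stanley \cite[Section 4.4]{St1}, is that a sequence $(\beta_i)_{i\geq i_0}$ agrees with a quasipolynomial of degree $<d$ and quasiperiod dividing $M$ if and only if its generating function is, up to a polynomial correction accounting for the first few terms, of the form $G(z)/H(z)$ where $\deg G < \deg H$, $H(0)\neq 0$, every root of $H$ is an $M$-th root of unity, and every root has multiplicity at most $d$.

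First I would rewrite the denominator of $F(z)$ in the standard form $\prod (1-z^{m})^{e_m}$. Each factor $1-z^r$ already has this shape, and each factor $1+z^r = (1-z^{2r})/(1-z^r)$; clearing these, $F(z) = N(z)/D(z)$ where $D(z)$ is a product of cyclotomic-type factors $(1-z^m)$ with $m$ ranging over divisors-related integers at most $2\,\deg(P)$, so every pole of $F$ lies at a root of unity of order dividing $\operatorname{lcm}\{2k : 1\le k\le \deg(P)\}$; this pins down the quasiperiod bound. Next I would bound the degree: the order of the pole of $F(z)$ at $z=1$ is exactly the number of factors in the denominator that vanish at $z=1$, namely $\sum_r n_r(\mu) + \sum_r n_r(\lambda) = \ell(\mu)+\ell(\lambda)$, the total number of parts, which is at most $|\mu|+|\lambda| = \deg(P)$. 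By Stanley's criterion the quasipolynomial therefore has degree at most $(\ell(\mu)+\ell(\lambda)) - 1 \le \deg(P) - 1$, and I should check that the pole order at $z=1$ dominates (or equals) the pole order at every other root of unity, which holds because $1-z^m$ vanishes at $z=1$ for every $m$ while a primitive $m$-th root of unity is a root only of those $1-z^{m'}$ with $m\mid m'$.

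The final point is the range of validity. Stanley's dictionary gives the quasipolynomial formula for $\beta_i$ for all $i$ at least the difference between the degree of the numerator and the degree of the denominator (after reduction); concretely, once $N(z)$ is reduced against $D(z)$, the formula is exact for $i\geq \deg N - \deg D + 1$, and for $i$ below that there may be a discrepancy. Since $F(z)$ as given is a product of power series each with constant term $1$ (up to the overall constant $1/(v_\mu v_\lambda)$) and no genuine polynomial part, $\deg N - \deg D$ is negative unless the numerator contributes; tracking this shows the quasipolynomial description is valid for all $i\ge 1$ in general, and — when $P$ has zero constant term, so that $\beta_0 = \lim_n \langle P, R_n^0\rangle_{B_n} = \langle P, \mathbf{1}\rangle = 0$ matches the quasipolynomial evaluated at $0$ — for all $i\ge 0$. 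I expect the main obstacle to be the bookkeeping in this last step: carefully verifying that no polynomial part appears and that the degree-of-numerator-minus-degree-of-denominator count yields exactly the claimed ranges $i\ge 1$ (general) and $i\ge 0$ (zero constant term), rather than some larger threshold. The pole-order and quasiperiod estimates are comparatively routine once the denominator is put in product form.
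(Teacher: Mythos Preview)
Your approach is essentially the same as the paper's: invoke Corollary~\ref{BettiOGF} to get a rational generating function, then apply Stanley's criterion \cite[Proposition~4.4.1]{St1}. Two points need correction.

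First, your computation of the pole at $z=1$ is off. The factors $(1+z^r)$ do \emph{not} vanish at $z=1$, so the pole of $F(z)$ there has order only $\ell(\mu)$, not $\ell(\mu)+\ell(\lambda)$; in particular, for $\mu=\emptyset$ there is no pole at $z=1$ at all, and your ``$z=1$ dominates'' argument fails. Fortunately you don't need it: at any root of unity $\zeta$, the pole order of $F$ is at most the total number of factors $\sum_r n_r(\mu)+\sum_r n_r(\lambda)=\ell(\mu)+\ell(\lambda)\le |\mu|+|\lambda|=\deg(P)$, which is all Stanley requires for the degree bound $\deg(P)-1$. (The paper simply bounds the maximum multiplicity in the common denominator by $\deg(P)$ without singling out any particular root.)

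Second, your justification for the $i\ge 0$ range when $P$ has zero constant term is wrong: you claim $\beta_0=\langle P,\mathbf{1}\rangle=0$, but this is false---for instance $P=X_1$ has zero constant term yet $\beta_0=\langle X_1,\mathbf{1}\rangle_{B_n}=\tfrac12$. The correct argument (which the paper uses) is structural: if $P$ has zero constant term, it is a linear combination of $\binom{X}{\mu}\binom{Y}{\lambda}$ with $|\mu|+|\lambda|>0$, each of which has generating function with constant numerator and positive-degree denominator. A finite sum of strictly proper rational functions is again strictly proper, so Stanley's criterion applies directly for all $i\ge 0$. For general $P$ with constant term $c$, note that $P$ and $P-c$ give the same $\beta_i$ for $i\ge 1$ since only $R_n^0$ contains the trivial representation; this yields the $i\ge 1$ claim.
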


Let $\Co^n$ denote the canonical representation of $B_n$ by signed permutation matrices. To illustrate Corollaries  \ref{BettiOGF}, \ref{RECURR}, and \ref{quasipolynomiality}, Example \ref{ExampleSym2} gives the character polynomial for the sequence of $B_n$-representations $\{\mathrm{Sym}^2 \Co^n\}_n$
and their associated stable twisted Betti numbers.

\begin{example} \label{ExampleSym2}({$\mathrm{Sym}^2 \Co^n$}).

Character polynomial:
\begin{align*}
& X_1 + { X_1 \choose 2} +Y_1+ {Y_1 \choose 2} + X_2 -Y_2 -X_1Y_1
\\ = &   {X \choose \Y{1} } + {X \choose \Y{1,1} } +  {Y \choose \Y{1} } + {Y \choose \Y{1,1} } +  {X \choose \Y{2} } - {Y \choose \Y{2} } - {X \choose \Y{1} } { Y \choose \Y{1} }
\end{align*}

Betti numbers:
\begin{align*} & \sum_{i = 0}^{\infty} \beta_i z^i  =   \frac{-z^4+z^2+1}{(1-z^2)^2(1+z^2)} \\
& =  1+2z^2 + 2z^{4} + 3z^{6} + 3z^{8}  + 4 z^{10} + 4z^{12} + \cdots + \left\lfloor \frac{d+3}{2} \right\rfloor z^{2d} + \cdots
\end{align*}

Recurrence: \qquad $\displaystyle \beta_d = \beta_{d-2} +  \beta_{d-4}  - \beta_{d-6}  \qquad$ for $d \geq 7$ \\

Quasipolynomiality: \qquad For $d \geq 0$,
$$\beta_d = \left\{ \begin{array}{cl} \frac{d+4}{4} & d \equiv 0 \pmod{4} \\[5pt]   \frac{d+6}{4} & d \equiv 2 \pmod{4} \\[5pt]   0 & d \equiv 1,3 \pmod{4}  \end{array} \right. $$

\end{example}
Additional examples of stable twisted Betti numbers are given in Section \ref{SectionExamples}, with their associated generating functions, recurrence relations, and quasipolynomials.

\section{Maximal tori in the general linear groups} \label{GL}
 In this section, we let $T(n,q)$ denote the set of $F$-stable maximal tori of $GL_n(\overline{F_q})$. Fulman \cite[Theorem 3.2]{F} proved the following.
\begin{equation} \label{jef}
1 + \sum_{n = 1}^{\infty} \frac{u^n}{|\GLnq |} \sum_{T \in T(n,q)} \prod_{i= 1}^{\infty} x_i^{X_i(T)}
= \prod_{k = 1}^{\infty} \exp \left[ \frac{x_k u^k}{(q^k-1)k}  \right].
\end{equation}
Chen \cite{Che} deduced from \eqref{jef} that for any partition $\lambda$,
{\small 
\begin{equation} \label{cheneq} \sum_{n=0}^{\infty} \frac{u^n}{|\GLnq|} \sum_{T \in T(n,q)} {X \choose \lambda} (T)
= \frac{1}{z_{\lambda}}  \prod_{k=1}^{|\lambda|} \left( \frac{u^k}{q^k-1} \right)^{n_k(\lambda)} 
\prod_{r =1}^{\infty} \frac{1}{(1-u/q^r)}
\end{equation}
}
 Here if $\lambda$ is the empty partition, we take the constant $n=0$ term on the left hand side to be $1$; otherwise, the constant term is $0$.

To analyze these formulas, we use the following elementary and well-known lemma.
We adopt the following notation: given a power series $f(u)$,
we let $[u^n] f(u)$ denote the coefficient of $u^n$.

\begin{lemma}[\textbf{The limit of a series' coefficients}] \label{tay} If the Taylor series of $f(u)$ around $0$ converges at $u=1$, then
\[ \lim_{n \rightarrow \infty} [u^n] \frac{f(u)}{1-u} = f(1) .\]
\end{lemma}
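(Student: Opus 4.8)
The plan is to prove Lemma~\ref{tay} directly from the algebra of formal power series together with a standard real-analysis fact about power series evaluated at the boundary of their interval of convergence (essentially Abel's theorem in a mild form). Write $f(u) = \sum_{k=0}^\infty a_k u^k$, so that $f(1) = \sum_{k=0}^\infty a_k$ converges by hypothesis. Multiplying by the geometric series $\frac{1}{1-u} = \sum_{m=0}^\infty u^m$, the Cauchy product gives
\[
\frac{f(u)}{1-u} = \sum_{n=0}^\infty \left( \sum_{k=0}^n a_k \right) u^n,
\]
so that $[u^n]\frac{f(u)}{1-u} = \sum_{k=0}^n a_k$, the $n$th partial sum of the series $\sum_k a_k$. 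The claim $\lim_{n\to\infty}[u^n]\frac{f(u)}{1-u} = f(1)$ is then literally the statement that the partial sums of $\sum_k a_k$ converge to its sum, which is exactly the meaning of the hypothesis that the Taylor series converges at $u=1$.

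First I would make the formal identity precise: since $f(u)$ has a Taylor series around $0$ that converges at $u=1$, its radius of convergence is at least $1$, so $f$ is represented by $\sum a_k u^k$ on $(-1,1)$, and the Cauchy product with $\sum u^m$ is valid there (both series are absolutely convergent on $|u|<1$). This justifies reading off $[u^n]\frac{f(u)}{1-u}$ as $\sum_{k=0}^n a_k$. Then I would invoke convergence of $\sum_{k=0}^\infty a_k$ — guaranteed by the hypothesis — to conclude that these partial sums tend to $\sum_{k=0}^\infty a_k$. Finally, I would identify $\sum_{k=0}^\infty a_k$ with $f(1)$: by Abel's theorem, if $\sum a_k$ converges then $\lim_{u \to 1^-} \sum a_k u^k = \sum a_k$, and since $f$ agrees with its Taylor series on $(-1,1)$ and ``converges at $u=1$'' is understood to mean precisely that $f(1) := \sum a_k$, we get $f(1) = \sum_{k=0}^\infty a_k$, completing the proof.

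There is essentially no serious obstacle here; the lemma is elementary, as the paper itself notes. The only point requiring any care is the interpretation of the phrase ``the Taylor series of $f(u)$ converges at $u=1$'' and of the symbol $f(1)$ — one should read this as: the radius of convergence is $\geq 1$, the numerical series $\sum_k a_k$ converges, and $f(1)$ denotes that sum (equivalently, the Abel limit $\lim_{u\to 1^-} f(u)$, which coincides with it). With that reading the proof is just the observation that partial-sum sequences converge to their limit, packaged through the Cauchy-product identity $[u^n]\frac{f(u)}{1-u} = \sum_{k=0}^n a_k$. I would keep the write-up to a few lines, stating the Cauchy product identity, noting the partial-sum interpretation, and citing Abel's theorem (or the definition of convergence of a series) for the identification with $f(1)$.
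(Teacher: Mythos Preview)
Your argument is correct and matches the paper's proof exactly: write $f(u)=\sum_k a_k u^k$, observe via the Cauchy product with $\frac{1}{1-u}$ that $[u^n]\frac{f(u)}{1-u}=\sum_{k=0}^n a_k$, and conclude by definition of convergence of the series. The paper's write-up is just the two-line version of what you wrote; your extra remarks about Abel's theorem and the meaning of $f(1)$ are harmless elaborations but not needed for the lemma as stated.
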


\begin{proof} Write the Taylor expansion $f(u)=\sum_{n=0}^{\infty} a_n u^n$. Then observe that
\[ [u^n] \frac{f(u)}{1-u} = \sum_{i=0}^n a_i. \qedhere \]
\end{proof}

We can now prove Theorem \ref{asym} (i), which gives formulas for the stable values of polynomial statistics on the space of $F$-stable maximal tori in $GL_n(\overline{F_q})$. Recall that the number of such tori is $q^{n^2-n}$.


\begin{proof}[Proof of Theorem \ref{asym} (i)] It follows from \eqref{cheneq} that \[ \frac{1}{q^{n^2-n}} \sum_{T \in T(n,q)}
{X \choose \lambda} (T) \] is equal to
\[ \frac{|\GLnq|}{q^{n^2-n}} [u^n] \frac{1}{z_{\lambda}} \prod_{k=1}^{|\lambda|} \left( \frac{u^k}{q^k-1} \right) ^{n_k(\lambda)}
\cdot \prod_{r =1}^{\infty} \frac{1}{1-u/q^r} .\]
Since $\displaystyle [u^n]f(u) = [u^n]\frac{f(uq)}{q^n}$ for any function $f$, this expression is equal to
\begin{eqnarray*}
& & \frac{|\GLnq|}{q^{n^2}} [u^n] \frac{1}{z_{\lambda}} \prod_{k=1}^{|\lambda|} \left( \frac{u^k q^k}{q^k-1} \right) ^{n_k(\lambda)}
\cdot \frac{1}{1-u} \prod_{r = 1}^{\infty} \frac{1}{1-u/q^r} \\
& = & (1-1/q) \cdots (1-1/q^n) [u^n] \frac{1}{z_{\lambda}} \prod_{k=1}^{|\lambda|} \left( \frac{u^k q^k}{q^k-1} \right) ^{n_k(\lambda)}
\cdot \frac{1}{1-u} \prod_{r = 1}^{\infty} \frac{1}{1-u/q^r}.
\end{eqnarray*}

The result now follows by taking the limit as $n$ tends to infinity, and using Lemma \ref{tay}.  To see that Lemma \ref{tay} is applicable, note that the Taylor series of $\prod_{r \geq 1} \frac{1}{1-u/q^r}$ around $0$ converges at $u=1$. Indeed,
this Taylor series is given by part 2 of Lemma \ref{2props}, and  \[ (1-1/q) \cdots (1-1/q^n) \geq 1 - \frac{1}{q} - \frac{1}{q^2} \]
for all $n$, as is well known from Euler's pentagonal number theorem (page 11 of \cite{A}).

\end{proof}

\begin{remark} We remark that there is an alternate approach to proving Theorem \ref{asym}(i). Chen computed a generating function for  the stable twisted Betti numbers for maximal tori in type A \cite[Proof of Corollary (II)]{Che}. Combining this with \cite[Theorem 5.6]{CEF} proves the result.
\end{remark}


Theorem \ref{asym}(i) gives an efficient way to recover some computations of Church--Ellenberg--Farb \cite{CEF}.

\begin{example}(\cite[Theorems 5.9 \& 5.10]{CEF})\label{EXAA}
\begin{flalign*}(a) & \lim_{n \rightarrow \infty} \frac{1}{q^{n^2-n}} \sum_{T \in T(n,q)} X_1(T) = \frac{q}{q-1}.&&\\
  (b) & \lim_{n \rightarrow \infty} \frac{1}{q^{n^2-n}} \sum_{T \in T(n,q)} \left[ {X_1 \choose 2} (T) - X_2 (T) \right] 
   \\ & 
   = \frac{1}{2} \left( \frac{q}{q-1} \right)^2 - \frac{1}{2} \frac{q^2}{(q^2-1)}\\
& = \frac{1}{q(1-1/q)(1-1/q^2)}.&&\\
\end{flalign*}

\end{example}

\subsection{A new proof of Lehrer's theorem}

In what follows we give a new proof of Theorem \ref{classf}(i) using symmetric
function theory. It should be possible to extend these methods to types B/C, but the argument is more involved and we do not pursue it here.  For the remainder of this section, $R_n^i$ denotes the $i$th graded piece of the coinvariant algebra $R_n^*$ in type $A$. All necessary background in symmetric function theory can be found in Macdonald \cite{M} or Stanley \cite{St}. The following two lemmas will be crucial. Lemma \ref{count} is stated in Fulman \cite{F} and is immediate from Springer--Steinberg \cite[Section 2.7]{SS}.

\begin{lemma}[\textbf{Enumerating tori in $GL_n(\overline{F_q})$ of a given type} \cite{SS, F}] \label{count} Fix a partition $\lambda$ of $n$. The number of $F$-stable maximal tori of $GL_n(\overline{F_q})$ of type $\lambda$ is equal to
\[ \frac{|\GLnq|}{z_{\lambda} \prod_{r=1}^{n} (q^r-1)^{n_r(\lambda)}} .\]
Here $n_r(\lambda)$ is the number of parts of $\lambda$ of size $r$, and
$z_{\lambda}=\prod_{r=1}^{|\lambda|} r^{n_r(\lambda)} n_r(\lambda)!$
\end{lemma}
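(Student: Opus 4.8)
The plan is to derive the count from the standard parametrization of $F$-stable maximal tori in a split connected reductive group, following Springer--Steinberg \cite[Section 2.7]{SS} (see also Carter \cite[Chapter 3]{Ca}); once that machinery is in place, the lemma is essentially bookkeeping. Write $G = GL_n(\overline{F_q})$, $G^F = \GLnq$, and let $W = S_n$ be the Weyl group relative to a fixed $F$-split maximal torus $T_0$. Because $GL_n$ is split, $F$ acts trivially on $W$, and the general theory says that two $F$-stable maximal tori are $G^F$-conjugate precisely when they have the same type, with the types running over the conjugacy classes of $W$, i.e.\ over partitions $\lambda \vdash n$. In particular the $F$-stable maximal tori of a fixed type $\lambda$ form a single $G^F$-conjugacy class, so their number equals the orbit size
\[
\bigl[\,G^F : N_{G^F}(T_w)\,\bigr] \;=\; \frac{|\GLnq|}{|N_G(T_w)^F|},
\]
where $w \in S_n$ is any permutation of cycle type $\lambda$ and $T_w$ is an $F$-stable maximal torus of type $w$.

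It then remains to evaluate $|N_G(T_w)^F|$, which I would split into the two factors $|T_w^F|$ and $|C_{S_n}(w)| = z_\lambda$. Taking $F$-fixed points of the exact sequence $1 \to T_w \to N_G(T_w) \to W \to 1$ and invoking Lang's theorem (so that $H^1(F, T_w) = 0$, $T_w$ being connected) yields an exact sequence $1 \to T_w^F \to N_G(T_w)^F \to W^{wF} \to 1$; since $F$ is split, the twisted fixed group $W^{wF}$ is just the ordinary centralizer $C_{S_n}(w)$, whose order is $z_\lambda$ by the classical formula for conjugacy class sizes in $S_n$. For $|T_w^F|$, I would use that under a suitable isomorphism $T_w \cong (\overline{F_q}^{\,\times})^n$ the Frobenius $F$ acts as the $q$-power map followed by the coordinate permutation $w$; on the block of coordinates supported on a single $r$-cycle of $w$, the equations $F(t)=t$ determine every coordinate from the first and force that first coordinate into $F_{q^r}^{\,\times}$, so each $r$-cycle contributes a factor $q^r - 1$. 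Hence $|T_w^F| = \prod_{r=1}^n (q^r - 1)^{n_r(\lambda)}$, and multiplying the two factors gives $|N_G(T_w)^F| = z_\lambda \prod_{r} (q^r-1)^{n_r(\lambda)}$, which is exactly the claimed formula.

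I do not expect a genuinely hard step: the content is entirely the two imported facts — that $F$-stable maximal tori of fixed type form one $G^F$-orbit, and the Lang-theorem identification $N_G(T_w)^F / T_w^F \cong C_{S_n}(w)$ — both standard for split $GL_n$. The main thing requiring care is transcribing these correctly, in particular keeping straight that it is a priori the $F$-twisted centralizer of $w$ that appears, collapsing to the ordinary centralizer only because $GL_n$ is split. As a consistency check one may sum the resulting expression over all $\lambda \vdash n$ and recover $q^{n^2-n}$; equivalently, extracting the coefficient of $u^n \prod_r x_r^{n_r(\lambda)}$ from Fulman's identity \eqref{jef} reproduces the lemma, although that route is circular since \eqref{jef} is itself proved using this torus count.
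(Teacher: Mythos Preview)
Your proposal is correct and follows exactly the approach the paper indicates: the paper does not give a proof but simply states that the lemma is ``immediate from Springer--Steinberg \cite[Section 2.7]{SS}'', and your argument is a faithful unpacking of precisely that reference. The orbit-size computation via $|N_G(T_w)^F| = |T_w^F|\cdot|C_{S_n}(w)|$, with Lang's theorem handling the exact sequence and the explicit cycle-by-cycle count of $|T_w^F|$, is the standard Springer--Steinberg derivation the paper is invoking.
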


 Recall that a \emph{standard tableau} of \emph{shape $\lambda$} is a bijective labeling of the boxes of the Young diagram for $\lambda$ by the numbers $1, \cdots, n$ with the property that in each row and in each column the labels are increasing. The \emph{descent set} of such a tableau is the set of numbers $i \in \{1,\cdots,n-1 \}$ for which the box labeled $(i+1)$ is in a lower row than the box labeled $i$. The \emph{major index} $\mathrm{maj}(Y)$ of a tableau $Y$ is the sum of the numbers in its descent set. We let $f_{\lambda,i}$
be the number of standard Young tableaux of shape $\lambda$ and major index $i$.

For example, consider the Young diagram for the partition $\lambda = (2,1,1)$ and its three associated standard tableaux
$$ Y_1= \begin{ytableau} 1&2\\3\\4  \end{ytableau} \qquad Y_2= \begin{ytableau} 1&3\\2\\4 \end{ytableau} \qquad Y_3= \begin{ytableau} 1&4\\2\\3  \end{ytableau} $$
Their descent sets are $\{2,3\}$,  $\{1,3\}$, and $\{1,2\}$, respectively. Hence
$$ \mathrm{maj}(Y_1) = 5 \qquad \mathrm{maj}(Y_2) = 4, \qquad \text{and} \qquad \mathrm{maj}(Y_3) = 3.$$  In this example, 
$$ f_{(2,1,1), i} = \left\{ \begin{array}{ll} 1, & i=3,4,5 \\ 0, & \text{otherwise.} \end{array} \right.$$


Lemma \ref{reut} is due to Stanley, Lusztig, and Kraskiewicz--Weyman. See Reutenauer \cite[Theorem 8.8]{Re} for a proof.

\begin{lemma}[\textbf{Decomposing the $S_n$-representations $R^i_n$} {\cite[Theorem 8.8]{Re}}]\label{reut}  The multiplicity of the irreducible
representation of $S_n$ indexed by $\lambda$ in $R_n^i$ is equal to
$f_{\lambda,i}$.
\end{lemma}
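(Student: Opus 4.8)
The plan is to compute the full graded multiplicity of each irreducible in $R_n^*$ at once and recognize it as a major-index generating function over standard tableaux. It suffices to show that the \emph{graded multiplicity}
\[ b_\lambda(q) \;:=\; \sum_{i\ge 0} \big(\text{multiplicity of the irreducible $S_n$-module labelled by $\lambda$ in }R_n^i\big)\,q^i \]
equals $\sum_{T\in \mathrm{SYT}(\lambda)} q^{\mathrm{maj}(T)}$; extracting the coefficient of $q^i$ then gives the lemma. In symmetric-function language, writing $\mathrm{Frob}_q(M)=\sum_d \mathrm{ch}(M_d)\,q^d$ for the graded Frobenius characteristic of a graded $S_n$-module $M=\bigoplus_d M_d$, the claim is the identity
\[ \mathrm{Frob}_q(R_n^*) \;=\; \sum_{\lambda\vdash n}\Big(\sum_{T\in \mathrm{SYT}(\lambda)} q^{\mathrm{maj}(T)}\Big)\, s_\lambda . \]

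First I would reduce to a computation in the full polynomial ring. By Chevalley's theorem $\mathbb{C}[x_1,\dots,x_n]$ is free over its invariant subring $\mathbb{C}[e_1,\dots,e_n]$, a polynomial algebra on generators of degrees $1,2,\dots,n$ carrying the trivial $S_n$-action, with $R_n^*$ a homogeneous system of coset representatives; hence there is an isomorphism of graded $S_n$-modules $\mathbb{C}[x_1,\dots,x_n]\cong R_n^*\otimes \mathbb{C}[e_1,\dots,e_n]$, so that with $(q;q)_n:=(1-q)(1-q^2)\cdots(1-q^n)$ one has $\mathrm{Frob}_q\big(\mathbb{C}[x_1,\dots,x_n]\big)= (q;q)_n^{-1}\,\mathrm{Frob}_q(R_n^*)$. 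The left-hand side I would compute directly by a Molien-type argument: for $\sigma\in S_n$ of cycle type $\mu$, acting factor-by-factor over the cycles of $\sigma$ gives $\sum_d \mathrm{tr}\big(\sigma\mid \mathbb{C}[x]_d\big)q^d=\prod_j(1-q^{\mu_j})^{-1}$. Summing over cycle types with the usual weights and applying the exponential formula identifies $\mathrm{Frob}_q(\mathbb{C}[x_1,\dots,x_n])$ with the degree-$n$ part of $\exp\!\big(\sum_{r\ge1}\tfrac{p_r}{r(1-q^r)}\big)=\prod_{k\ge0}\prod_i(1-q^kx_i)^{-1}$, which by the Cauchy identity equals $\sum_\lambda s_\lambda\, s_\lambda(1,q,q^2,\dots)$. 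Thus the coefficient of $s_\lambda$ in $\mathrm{Frob}_q(R_n^*)$ is $(q;q)_n\, s_\lambda(1,q,q^2,\dots)$.

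It remains to identify this principal specialization with $\sum_{T\in\mathrm{SYT}(\lambda)}q^{\mathrm{maj}(T)}$. One route is through Gessel's fundamental quasisymmetric functions: from the expansion $s_\lambda=\sum_{T\in\mathrm{SYT}(\lambda)} F_{\mathrm{Des}(T),\,n}$ and the standard principal-specialization formula for $F_{S,n}$ one gets $(q;q)_n\,s_\lambda(1,q,q^2,\dots)$ as a sum of $q$-powers indexed by descent sets of standard tableaux; the exponent produced is $\sum_{i\in \mathrm{Des}(T)}(n-i)$, which is turned into $\mathrm{maj}(T)=\sum_{i\in\mathrm{Des}(T)}i$ by replacing $T$ with its Schützenberger evacuation (whose descent set is $\{n-i:i\in\mathrm{Des}(T)\}$), leaving the distribution unchanged. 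An alternative, avoiding quasisymmetric functions, uses the $q$-hook-content formula $s_\lambda(1,q,q^2,\dots)=q^{n(\lambda)}\prod_{c\in\lambda}(1-q^{h(c)})^{-1}$ together with Stanley's identity $\sum_{T\in\mathrm{SYT}(\lambda)}q^{\mathrm{maj}(T)}=q^{n(\lambda)}[n]_q!\big/\!\prod_{c\in\lambda}[h(c)]_q$. Either way the coefficient of $s_\lambda$ in $\mathrm{Frob}_q(R_n^*)$ is $\sum_T q^{\mathrm{maj}(T)}$, and comparing coefficients of $q^i$ proves the lemma.

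As a sanity check, setting $q=1$ collapses the right-hand side to $\sum_\lambda f^\lambda s_\lambda=h_1^n$, the Frobenius characteristic of the regular representation, recovering the classical fact that $R_n^*\cong\mathbb{C}[S_n]$ ungraded. The only genuine work is bookkeeping: justifying the exponential-formula/Cauchy manipulation in the Molien computation, and matching the $\mathrm{maj}$-versus-$\mathrm{comaj}$ conventions in the quasisymmetric specialization (equivalently, citing the correct form of Stanley's tableau identity). I expect this convention-matching to be the only delicate point; no idea beyond assembling these standard symmetric-function facts is required.
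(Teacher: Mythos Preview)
The paper does not give its own proof of this lemma; it is stated with attribution to Stanley, Lusztig, and Kra\'skiewicz--Weyman and a citation to Reutenauer \cite[Theorem 8.8]{Re}. Your sketch is correct and essentially self-contained: the Chevalley decomposition reduces to computing $\mathrm{Frob}_q(\mathbb{C}[x_1,\dots,x_n])$, the Molien/Cauchy step identifies this with $\sum_\lambda s_\lambda\,s_\lambda(1,q,q^2,\dots)$, and either the quasisymmetric expansion (with evacuation to swap $\mathrm{comaj}$ for $\mathrm{maj}$) or the $q$-hook identities finish the job. All of these ingredients are standard and assembled correctly.

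It is worth noting that the central identity you derive, namely $(q;q)_n\,s_\lambda(1,q,q^2,\dots)=\sum_{T\in\mathrm{SYT}(\lambda)}q^{\mathrm{maj}(T)}$, is exactly the fact the paper invokes as a black box (citing Stanley \cite[p.~363]{St}) in its proof of Theorem~\ref{classf}(i). So rather than providing an alternative route, your argument unpacks the two external references on which the paper already leans for this part of the story. The only delicate point, as you flag, is the $\mathrm{maj}$/$\mathrm{comaj}$ convention in the quasisymmetric specialization; your evacuation argument resolves it, and the $q$-hook route sidesteps it altogether.
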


We now prove Theorem \ref{classf}(i).

\begin{proof} [Proof of Theorem \ref{classf}(i)] It suffices to prove Theorem \ref{classf}(i) in the special case that $\chi$ is the class function which is equal to $p_{\mu}$ on elements of type $\mu$, where $p_{\mu}$ denotes the power sum symmetric function corresponding to a partition $\mu$. Indeed, taking the coefficient of the Schur function $s_{\lambda}$ in $p_{\mu}$ gives the irreducible character value $\chi^{\lambda}_{\mu}$ of the symmetric group, and these are a basis for the space of class functions.

By Lemma \ref{count}, the left hand side of \eqref{lehrers} is equal to

\[ \frac{1}{q^{n^2-n}} \sum_{|\mu|=n} \frac{|\GLnq|}{z_{\mu} \prod_{k=1}^n (q^k-1)^{n_k(\mu)}} p_{\mu} .\]

On the other hand, since \[ p_{\mu} = \sum_{|\lambda|=n} \chi^{\lambda}_{\mu} s_{\lambda}, \]
it follows from Lemma \ref{reut} that the right hand side of \eqref{lehrers} is equal to

\begin{eqnarray*}
\sum_{i=0}^{{n \choose 2}} q^{-i} \left\langle\sum_{|\lambda|=n} \chi^{\lambda} s_{\lambda}, \sum_{|\lambda|=n}
f_{\lambda,i} \chi^{\lambda}\right\rangle
& = & \sum_{i= 0}^{{n \choose 2}}  q^{-i} \sum_{|\lambda|=n} f_{\lambda,i} s_{\lambda} \\
& = & \sum_{|\lambda|=n} s_{\lambda} \sum_{\mathrm{sh}(Y)=\lambda} \frac{1}{q^{\mathrm{maj}(Y)}},
\end{eqnarray*}
where the sum is over standard Young tableaux $Y$ of shape $\lambda$. By Stanley \cite[p. 363]{St} this is equal to
\begin{align*}
&    \sum_{|\lambda|=n} s_{\lambda} s_{\lambda}(1,1/q,1/q^2,\cdots) \prod_{i=1}^n (1-1/q^i) \\
& =  \frac{|\GLnq|}{q^{n^2-n}} \sum_{|\lambda|=n} s_{\lambda} s_{\lambda}(1/q,1/q^2,\cdots).
\end{align*}
By Macdonald \cite[Section 1.4]{M}, this is
\begin{align*}
&  \frac{|\GLnq|}{q^{n^2-n}} \sum_{|\mu|=n} \frac{1}{z_{\mu}} p_{\mu} p_{\mu}(1/q,1/q^2,\cdots) \\
& =  \frac{1}{q^{n^2-n}} \sum_{|\mu|=n} \frac{|\GLnq|}{z_{\mu} \prod_{k=1}^n (q^k-1)^{n_k(\mu)}} p_{\mu}.
\end{align*}

Thus we have shown that the two sides of the formula in Theorem \ref{classf}(i) are equal, completing the proof.
\end{proof}

\section{Maximal tori in symplectic and special orthogonal groups} \label{Sp}

In this section we use generating function techniques to study $F$-stable maximal tori in $Sp_{2n}(\overline{F_q})$, and (equivalently)
in $SO_{2n+1}(\overline{F_q})$. We prove  Theorem \ref{asym}(ii) and use it to recover results from Jim\'enez Rolland--Wilson \cite{RW}. In the second half of the section, we  derive analogs of results from Chen's lovely paper \cite{Che}  for the special orthogonal and symplectic groups, as well as analogs of results in a personal communication from Chen and Specter \cite{CS}.

The next two results are symplectic versions of results from Fulman \cite{F}.

\begin{lemma}[\textbf{Enumerating tori in $Sp_{2n}(\overline{F_q})$ of a given type}] \label{typeSp} Let $(\mu,\lambda)$ be an ordered pair of partitions satisfying $|\mu|+|\lambda|=n$.
 Let $n_r(\mu)$ denote the number of parts of $\mu$ of size $r$, and let $n_r(\lambda)$ denote the number of parts of $\lambda$ of size $r$. Then the number of $F$-stable maximal tori of $Sp_{2n}(\overline{F_q})$ of type $(\mu,\lambda)$ is equal to
\[ \frac{|\Spnq|}{\prod_{r=1}^n n_r(\mu)! \, n_r(\lambda)! \, (2r)^{n_r(\mu)+n_r(\lambda)}
\prod_{r=1}^n (q^r-1)^{n_r(\mu)} (q^r+1)^{n_r(\lambda)}}. \]
\end{lemma}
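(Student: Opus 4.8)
The plan is to count $F$-stable maximal tori of $\Sp_{2n}(\overline{F_q})$ by classifying them up to conjugacy via their associated $B_n$-conjugacy class (the ``type'' $(\mu,\lambda)$), exactly as in the type $A$ proof cited from Springer--Steinberg \cite[Section 2.7]{SS} and Fulman \cite{F}. Recall that the $\Sp_{2n}(F_q)$-conjugacy classes of $F$-stable maximal tori are in bijection with the $F$-conjugacy classes of the Weyl group $W = B_n$; since $F$ acts trivially here, these are just ordinary conjugacy classes of $B_n$, parameterized by double partitions $(\mu,\lambda)$ with $|\mu|+|\lambda|=n$. For a torus $T_w$ in the class labeled by $w \in B_n$, the number of $\Sp_{2n}(F_q)$-conjugates is $|\Spnq| / |C_{B_n}(w)|$ divided by... more precisely, the standard formula (see \cite[Section 2.7]{SS}) states that the number of $F$-stable maximal tori of type $w$ equals $|\Spnq| / |T_w^F| \cdot |C_W(w)|$ — I will use the precise normalization from Fulman's type $A$ argument and its symplectic analog. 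So the two ingredients needed are: the size of the centralizer $|C_{B_n}(w)|$ for $w$ of cycle type $(\mu,\lambda)$, and the order $|T_w^F|$ of the corresponding torus over $F_q$.

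For the first ingredient, a signed permutation whose positive cycles have lengths given by $\mu$ and negative cycles lengths given by $\lambda$ has centralizer in $B_n$ of order $\prod_r n_r(\mu)!\,(2r)^{n_r(\mu)} \cdot \prod_r n_r(\lambda)!\,(2r)^{n_r(\lambda)} = v_\mu v_\lambda$, which is precisely the quantity defined in the introduction (so that $2^n n!/(v_\mu v_\lambda)$ is the number of elements of that signed cycle type). This is a routine computation with wreath-product combinatorics: a positive $r$-cycle has centralizer $\Z/2r$ inside $B_r$ (cyclic rotations together with the global sign flip), a negative $r$-cycle likewise has centralizer of order $2r$, and one then accounts for permuting equal-length cycles among themselves. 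For the second ingredient, for a torus indexed by a single positive $r$-cycle the contribution to $|T_w^F|$ is $q^r - 1$, and for a single negative $r$-cycle it is $q^r + 1$ — these are the orders of the norm-one / relevant subtori, matching the interpretation in \cite[Section 4.1.4]{RW} that $X_r$ counts subtori splitting over $F_{q^r}$ and $Y_r$ counts those that do not. Hence $|T_w^F| = \prod_r (q^r-1)^{n_r(\mu)} (q^r+1)^{n_r(\lambda)}$.

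Putting these together, the number of $F$-stable maximal tori of type $(\mu,\lambda)$ is
\[
\frac{|\Spnq|}{|T_w^F| \cdot |C_{B_n}(w)|} = \frac{|\Spnq|}{\prod_{r} n_r(\mu)!\,n_r(\lambda)!\,(2r)^{n_r(\mu)+n_r(\lambda)} \cdot \prod_r (q^r-1)^{n_r(\mu)}(q^r+1)^{n_r(\lambda)}},
\]
which is exactly the claimed formula. I expect the main obstacle to be nailing down the correct normalization — specifically verifying that the relevant count from \cite[Section 2.7]{SS} really is $|G^F|/(|T^F|\,|C_W(w)|)$ in the symplectic setting and that the signs in $|T_w^F|$ come out as $q^r \mp 1$ rather than with the roles of $\mu$ and $\lambda$ swapped; this amounts to correctly identifying how $F$ twists the maximal torus by the element $w$ and computing $\det(q\cdot w - 1)$ (or the analogous cocharacter-lattice computation) on each cycle. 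Once the conventions in Definitions \ref{DefnTypeBCCharPoly} are matched to the geometry — positive cycles contributing split behavior — the rest is the bookkeeping with $v_\mu$, $v_\lambda$ already set up in the introduction.
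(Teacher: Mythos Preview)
Your proposal is correct and follows essentially the same approach as the paper: the paper's proof simply cites Springer--Steinberg \cite[Section 2.7]{SS} for the general count and records the centralizer size $\prod_r n_r(\mu)!\,n_r(\lambda)!\,(2r)^{n_r(\mu)+n_r(\lambda)}$, which is exactly what you spell out in more detail (including the torus order $|T_w^F|$ that the paper leaves implicit in the citation). Your hedging about normalization and sign conventions is unnecessary --- the formula $|G^F|/(|T_w^F|\cdot|C_W(w)|)$ is precisely what Springer--Steinberg gives, and your identification of $q^r-1$ with positive cycles and $q^r+1$ with negative cycles matches the conventions fixed in the introduction.
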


\begin{proof} This is immediate from Springer--Steinberg \cite[Section 2.7]{SS}, together with the fact that the
centralizer size of an element of $B_n$ of type $(\mu,\lambda)$ is equal to
\[ \prod_{r=1}^{n} n_r(\mu)! \,  n_r(\lambda)! \, (2r)^{n_r(\mu)+n_r(\lambda)}. \qedhere \] \end{proof}

 In this section, $T(n,q)$ denotes the set of $F$-stable maximal tori $T$ of $Sp_{2n}(\overline{F_q})$ (or $SO_{2n+1}(\overline{F_q})$).

\begin{theorem}[\textbf{A generating function for statistics on $T(n,q)$}]  \label{Spgen}
\begin{align*}
&   1 + \sum_{n = 1}^{\infty} \frac{u^n}{|\Spnq|} \sum_{T \in T(n,q)} \prod_{r=1}^{n} x_r^{X_r(T)} y_r^{Y_r(T)} \\
& = \prod_{k =1}^{\infty} \exp \left[ \frac{x_k u^k}{(q^k-1)2k} + \frac{y_k u^k}{(q^k+1)2k} \right].
 \end{align*}
\end{theorem}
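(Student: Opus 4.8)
The plan is to establish Theorem \ref{Spgen} by the same exponential-formula bookkeeping used for \eqref{jef}, adapted to the combinatorics of signed permutations. First I would invoke Lemma \ref{typeSp}: for a fixed type $(\mu,\lambda)$ the number of $F$-stable maximal tori of $\Spnq$ of that type, divided by $|\Spnq|$, is $\prod_{r} \frac{1}{n_r(\mu)!\, n_r(\lambda)!\, (2r)^{n_r(\mu)+n_r(\lambda)}(q^r-1)^{n_r(\mu)}(q^r+1)^{n_r(\lambda)}}$. Since the statistics $X_r(T)$ and $Y_r(T)$ are by definition $n_r(\mu_T)$ and $n_r(\lambda_T)$, the coefficient of $u^n$ in the left-hand side of Theorem \ref{Spgen} becomes a sum over double partitions $(\mu,\lambda)$ with $|\mu|+|\lambda|=n$ of that weight times $\prod_r x_r^{n_r(\mu)} y_r^{n_r(\lambda)}$.

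The key step is then to recognize this sum over double partitions as the coefficient extraction from a product over cycle lengths $k$. A double partition is the data, for each $k\geq 1$, of a nonnegative integer $a_k = n_k(\mu)$ (number of positive $k$-cycles) and $b_k = n_k(\lambda)$ (number of negative $k$-cycles), subject to $\sum_k k(a_k+b_k)=n$. The contribution of each $k$ factors completely, so
\[ 1 + \sum_{n=1}^\infty u^n \sum_{|\mu|+|\lambda|=n} \frac{\prod_r x_r^{n_r(\mu)} y_r^{n_r(\lambda)}}{\prod_r n_r(\mu)!\,n_r(\lambda)!\,(2r)^{n_r(\mu)+n_r(\lambda)}(q^r-1)^{n_r(\mu)}(q^r+1)^{n_r(\lambda)}} = \prod_{k=1}^\infty \left( \sum_{a=0}^\infty \frac{1}{a!}\Big(\frac{x_k u^k}{2k(q^k-1)}\Big)^a \right)\left( \sum_{b=0}^\infty \frac{1}{b!}\Big(\frac{y_k u^k}{2k(q^k+1)}\Big)^b \right). \]
Each inner sum is the Taylor series of an exponential, so the right-hand side is $\prod_{k=1}^\infty \exp\!\left[\frac{x_k u^k}{2k(q^k-1)} + \frac{y_k u^k}{2k(q^k+1)}\right]$, which is exactly the claimed formula. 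This is the direct analog of how \eqref{jef} follows from Lemma \ref{count}.

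I do not anticipate a serious obstacle here; the argument is essentially formal once Lemma \ref{typeSp} is in hand, and the only thing to be slightly careful about is the convention that the $n=0$ term and the empty-partition contributions match the constant term $1$ on both sides, and that the two factors for each $k$ (the $x_k$-factor from positive cycles and the $y_k$-factor from negative cycles) are genuinely independent so that the product over $k$ of (sum over $a_k$)$\times$(sum over $b_k$) correctly reproduces the sum over all double partitions. One could alternatively phrase the whole computation in the language of the exponential formula for species or of Pólya-type enumeration, but the bare generating-function manipulation above is cleanest and mirrors Fulman \cite{F} most closely. As a sanity check I would specialize $x_r=y_r=1$ to recover the total count $|T(n,q)|=q^{2n^2}$ via $\prod_k \exp[\frac{u^k}{2k(q^k-1)}+\frac{u^k}{2k(q^k+1)}]$, and also compare low-degree coefficients against Lemma \ref{typeSp} directly.
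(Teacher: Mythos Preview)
Your proposal is correct and follows essentially the same approach as the paper's proof: invoke Lemma \ref{typeSp} to rewrite the left-hand side as a sum over double partitions $(\mu,\lambda)$, then recognize that this sum factors over cycle lengths $k$ into a product of exponential Taylor series. Your write-up is in fact slightly more explicit than the paper's about why the factorization over $k$ works, but the argument is the same.
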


\begin{proof} Lemma \ref{typeSp} implies that the left-hand side of the theorem is equal to
\[ 1 + \sum_{n =1}^{\infty} \sum_{ (\mu,\lambda)\atop |\mu|+|\lambda|= n}
\frac{u^n \prod_{r=1}^n  x_r^{n_r(\mu)} y_r^{n_r(\lambda)}} {\prod_{r} n_r(\mu)! \, n_r(\lambda)! \, (2r)^{n_r(\mu)+n_r(\lambda)}
(q^r-1)^{n_r(\mu)} (q^r+1)^{n_r(\lambda)}} \] By the Taylor expansion of the exponential function,
this is equal to \[ \prod_{k = 1}^{\infty} \exp \left[ \frac{x_k u^k}{(q^k-1)2k} + \frac{y_k u^k}{(q^k+1)2k} \right]. \qedhere \]
\end{proof}

The following lemma will be helpful in manipulating the generating function of Theorem \ref{Spgen}.

\begin{lemma}[\textbf{Two generating function identities}] \label{2props} \quad
\begin{enumerate}
\item[(i)] $ \displaystyle \prod_{i =1}^{\infty} \exp \left[ \frac{1}{(q^i-1)} \frac{u^i}{i} \right] = \prod_{r= 1}^{\infty} \frac{1}{1-u/q^r} .$
\item[(ii)]  $\displaystyle \prod_{r= 1}^{\infty} \frac{1}{(1-u/q^r)} = 1 + \sum_{n=1}^{\infty} \frac{u^n}{q^n (1-1/q) \cdots (1-1/q^n)} .$
\end{enumerate}
\end{lemma}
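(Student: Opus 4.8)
The plan is to prove both identities by standard power-series manipulations, using the fact that $\log \prod_r \frac{1}{1-u/q^r}$ can be expanded as a double sum. For part (i), I would take logarithms of the right-hand side and expand:
\[
\log \prod_{r=1}^{\infty} \frac{1}{1-u/q^r} = \sum_{r=1}^{\infty} \sum_{i=1}^{\infty} \frac{1}{i} \left( \frac{u}{q^r} \right)^i = \sum_{i=1}^{\infty} \frac{u^i}{i} \sum_{r=1}^{\infty} \frac{1}{q^{ri}}.
\]
The inner geometric series sums to $\frac{1/q^i}{1-1/q^i} = \frac{1}{q^i-1}$, so the whole expression equals $\sum_{i=1}^{\infty} \frac{1}{q^i-1} \frac{u^i}{i}$, which is exactly the logarithm of the left-hand side of (i). Exponentiating gives the claim. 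The only point requiring care is the interchange of the two summations, which is justified since all terms are positive (for $0<u<1$, say) and the double series converges absolutely there, hence the identity of formal power series follows by comparing coefficients of each $u^n$.

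For part (ii), I would expand the product $\prod_{r=1}^{\infty} \frac{1}{1-u/q^r}$ directly using the $q$-series identity of Euler (Cauchy's identity):
\[
\prod_{r=1}^{\infty} \frac{1}{1-u/q^r} = \sum_{n=0}^{\infty} \frac{(u/q)^n}{(1-1/q)(1-1/q^2)\cdots(1-1/q^n)},
\]
which is precisely the statement after writing $(u/q)^n = u^n/q^n$. This is the classical identity $\prod_{r\geq 1}\frac{1}{1-xt^r} = \sum_{n\geq 0}\frac{x^n t^n}{(1-t)(1-t^2)\cdots(1-t^n)}$ applied with $t = 1/q$ and $x = u$ (or one can derive it from the $q$-binomial theorem); it is found, for instance, in Andrews \cite{A}. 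Alternatively, one can prove (ii) by induction on a truncated product, using the recurrence that multiplying $\prod_{r=1}^{N}\frac{1}{1-u/q^r}$ by $\frac{1}{1-u/q^{N+1}}$ shifts indices appropriately.

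I do not anticipate a serious obstacle here: both identities are classical and the arguments are short. The only mild subtlety is being explicit that we are working with formal power series in $u$ (with coefficients that are convergent series in $1/q$ for $q>1$), so that rearrangements and the passage between the logarithmic and exponential forms are legitimate; alternatively one fixes a real $q>1$ and a real $u$ with $|u|<q$ and works analytically, then invokes uniqueness of Taylor coefficients. Part (i) will be used to simplify the $x_k$-dependence in Theorem \ref{Spgen}, and part (ii) supplies the explicit Taylor expansion of $\prod_{r\geq 1}\frac{1}{1-u/q^r}$ needed to verify convergence at $u=1$ in the style of the proof of Theorem \ref{asym}(i).
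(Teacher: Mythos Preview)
Your proposal is correct and matches the paper's approach: the paper simply cites Fulman \cite[Theorem 3.4]{F} for part (i) and Andrews \cite[Corollary 2.2]{A} for part (ii), and your argument for (i) (take logarithms, interchange sums, recognize the geometric series $\sum_{r\ge 1} q^{-ri} = 1/(q^i-1)$) is exactly the computation underlying the cited reference, while for (ii) you invoke the same Euler identity from Andrews. You have merely supplied the details the paper outsources to citations.
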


\begin{proof} The first assertion is proved in the proof of Fulman \cite[Theorem 3.4]{F}.
The second assertion is classical and goes back to Euler; see Andrews \cite[Corollary 2.2]{A}. \end{proof}

As a corollary, we recover Steinberg's enumeration of the number of $F$-stable
maximal tori of $Sp_{2n}(\overline{F_q})$ \cite[Corollary 14.16]{Steinberg}. Recall that given a power series $f(u)$, we let $[u^n] f(u)$ denote the coefficient of $u^n$.

\begin{cor}[\textbf{Enumerating tori in $Sp_{2n}(\overline{F_q})$} {\cite[Corollary 14.16]{Steinberg}}] \label{countSptori} The number of $F$-stable maximal tori of $Sp_{2n}(\overline{F_q})$ is $q^{2n^2}$.
\end{cor}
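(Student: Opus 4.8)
The plan is to obtain the count directly from Theorem~\ref{Spgen} by specializing all the variables to $1$. Setting $x_r=y_r=1$ for every $r$, the left-hand side of Theorem~\ref{Spgen} becomes $1+\sum_{n\geq 1}\frac{|T(n,q)|}{|\Spnq|}u^n$, so it suffices to compute the coefficient of $u^n$ on the right-hand side and multiply it by the (classical) order $|\Spnq|=q^{n^2}\prod_{i=1}^n(q^{2i}-1)$.

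The key observation is that after this specialization the exponent telescopes nicely: for each $k\geq 1$,
\[ \frac{u^k}{2k(q^k-1)}+\frac{u^k}{2k(q^k+1)}=\frac{u^k}{2k}\cdot\frac{2q^k}{q^{2k}-1}=\frac{1}{q^{2k}-1}\cdot\frac{(uq)^k}{k}. \]
Hence the right-hand side of Theorem~\ref{Spgen} equals $\prod_{k\geq 1}\exp\!\left[\frac{1}{q^{2k}-1}\frac{(uq)^k}{k}\right]$, which is precisely the left-hand side of Lemma~\ref{2props}(i) after the substitution $q\mapsto q^2$, $u\mapsto uq$. So Lemma~\ref{2props}(i) gives that this product equals $\prod_{r\geq 1}\frac{1}{1-uq/q^{2r}}=\prod_{r\geq 1}\frac{1}{1-u/q^{2r-1}}$, and then Lemma~\ref{2props}(ii) under the same substitution yields the explicit expansion
\[ \prod_{r\geq 1}\frac{1}{1-u/q^{2r-1}}=1+\sum_{n\geq 1}\frac{(uq)^n}{q^{2n}(1-q^{-2})\cdots(1-q^{-2n})}. \]

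Reading off the coefficient of $u^n$ gives $\frac{|T(n,q)|}{|\Spnq|}=\frac{q^n}{q^{2n}\prod_{i=1}^n(1-q^{-2i})}$, so $|T(n,q)|=q^{n^2-n}\prod_{i=1}^n(q^{2i}-1)/\prod_{i=1}^n(1-q^{-2i})$, and it remains only to check this is $q^{2n^2}$. This is a short power-of-$q$ bookkeeping using $\prod_{i=1}^n(q^{2i}-1)=q^{n(n+1)}\prod_{i=1}^n(1-q^{-2i})$, after which the factors $\prod(1-q^{-2i})$ cancel and the exponents add up to $2n^2$. There is no real obstacle here; the only genuine content is the identification of the specialized generating function with the product $\prod_{r\geq 1}(1-u/q^{2r-1})^{-1}$ via Lemma~\ref{2props}, and the rest is routine.
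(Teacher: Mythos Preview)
Your proof is correct and follows essentially the same route as the paper's: specialize Theorem~\ref{Spgen} at $x_r=y_r=1$, simplify the exponent to $\frac{(uq)^k}{k(q^{2k}-1)}$, apply both parts of Lemma~\ref{2props} (with $q\mapsto q^2$, $u\mapsto uq$), and finish with the order formula for $|\Spnq|$. The only cosmetic difference is that the paper extracts a factor of $q^n$ via the identity $[u^n]f(u)=q^n[u^n]f(u/q)$ before invoking Lemma~\ref{2props}, whereas you fold that shift directly into the substitution; the computations are otherwise identical.
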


\begin{proof} By setting all variables $x_r=y_r=1$ in Theorem \ref{Spgen}, we find that the number of maximal tori is equal to
\begin{align*}
&  |\Spnq| \, [u^n] \prod_{r=1}^{\infty} \exp \left[ \frac{u^r}{(q^r-1)2r} + \frac{u^r}{(q^r+1)2r} \right] \\
& = |\Spnq| \, [u^n] \prod_{r=1}^{\infty} \exp \left[ \frac{u^r q^r}{r (q^{2r}-1)} \right].
\end{align*}
Since $\displaystyle [u^n]f(u) = q^n [u^n] f(u/q)$ for any function $f$, this coefficient is equal to
\begin{eqnarray*}
& & |\Spnq| \, q^n [u^n] \exp \left[ \sum_{r=1}^{\infty} \frac{u^r}{r(q^{2r}-1)} \right].
\end{eqnarray*}
By Lemma \ref{2props} (i), this is
\begin{eqnarray*}
&  & |\Spnq| \,  q^n [u^n] \prod_{r = 1}^{\infty} \frac{1}{(1-u/q^{2r})}
\end{eqnarray*}
and by Lemma \ref{2props} (ii), this expression equals
\begin{eqnarray*}
&  & |\Spnq| \,  q^n \frac{1}{q^{2n} (1-1/q^2) \cdots (1-1/q^{2n})}.
\end{eqnarray*}
Finally, we use the identity $\displaystyle |\Spnq| = q^{n^2} \prod_{i=1}^n (q^{2i}-1) $ to conclude that the number of  $F$-stable maximal tori is $q^{2n^2}$ as claimed.
\end{proof}

Table 1 of the paper Jim\'enez Rolland--Wilson \cite{RW} gives explicit expressions for the average values of the following statistics on $T(n,q)$:  $$ X_1 , \qquad  X_1 + Y_1, \qquad  {X_1+ Y_1 \choose 2} - (X_2 + Y_2), \qquad X_2 - Y_2 $$

These average values can be derived more efficiently from the generating function given in Theorem  \ref{basic}, and their limiting values are special cases of Theorem \ref{asym} (ii). Theorem \ref{basic}, which we now prove, is an analog of Chen's Equation \eqref{cheneq} from Section \ref{GL}.

\begin{theorem} [\textbf{Average values of the statistic ${X \choose \mu} {Y \choose \lambda}$ on $T(n,q)$}] \label{basic} Fix a double partition $(\mu, \lambda)$. Then
\[ \sum_{n= 0}^{\infty} \frac{u^n}{|\Spnq|} \sum_{T \in T(n,q)} {X \choose \mu} {Y \choose \lambda} (T) \]
is equal to
\[ \frac{1}{v_{\mu} v_{\lambda}} \prod_{r=1}^{|\mu|} \left( \frac{u^r}{q^r-1} \right)^{n_r(\mu)}
\prod_{r=1}^{|\lambda|} \left( \frac{u^r}{q^r+1} \right)^{n_r(\lambda)}
\prod_{k =1}^{\infty} \frac{1}{1-u/q^{2k-1}}. \]
\end{theorem}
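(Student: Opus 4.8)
The plan is to extract the generating function for $\binom{X}{\mu}\binom{Y}{\lambda}$ from the master identity in Theorem~\ref{Spgen} by the same substitution trick Chen uses in type $A$. First I would recall the combinatorial identity that for a single variable, $\sum_{m \geq 0} \binom{m}{j} t^m = \frac{t^j}{(1-t)^{j+1}}$, and more to the point that the operation ``multiply the coefficient of $x_r^m$ by $\binom{m}{n_r(\mu)}$'' is implemented on a generating function $\sum_m c_m x_r^m$ by applying $\frac{1}{n_r(\mu)!} \, \partial_{x_r}^{n_r(\mu)}$ and then setting $x_r = 1$; equivalently, by extracting the coefficient of $(x_r - 1)^{n_r(\mu)}$ in the Taylor expansion about $x_r = 1$. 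Applying this simultaneously in all the variables $x_r$ (with exponent $n_r(\mu)$) and all the variables $y_r$ (with exponent $n_r(\lambda)$) to the left-hand side of Theorem~\ref{Spgen} produces exactly $1 + \sum_{n\geq 1} \frac{u^n}{|\Spnq|}\sum_{T} \binom{X}{\mu}\binom{Y}{\lambda}(T)$, up to the harmless constant-term bookkeeping (the $n=0$ term is $1$ only when $(\mu,\lambda)$ is the pair of empty partitions, otherwise $0$, matching the stated convention).

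Next I would carry out the same operations on the right-hand side, $\prod_{k\geq 1}\exp\!\left[\frac{x_k u^k}{(q^k-1)2k} + \frac{y_k u^k}{(q^k+1)2k}\right]$. Because the product factors over $k$ and each factor involves only $x_k$ and $y_k$, the differentiations decouple: the $r$-th factor (for those $r$ with $n_r(\mu)$ or $n_r(\lambda)$ nonzero) gets hit by $\frac{1}{n_r(\mu)!}\partial_{x_r}^{n_r(\mu)} \frac{1}{n_r(\lambda)!}\partial_{y_r}^{n_r(\lambda)}$ and then $x_r, y_r$ are set to $1$, while all other factors simply have $x_k = y_k = 1$ substituted. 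Differentiating $\exp[a x_r]$ some $n_r(\mu)$ times in $x_r$ and evaluating at $x_r = 1$ multiplies by $a^{n_r(\mu)}$ where $a = \frac{u^r}{(q^r-1)2r}$; similarly for $y_r$ with $a = \frac{u^r}{(q^r+1)2r}$. Collecting, the factors with $n_r(\mu)$ (resp.\ $n_r(\lambda)$) nonzero contribute $\frac{1}{n_r(\mu)!}\left(\frac{u^r}{(q^r-1)2r}\right)^{n_r(\mu)}$ (resp.\ the analogous $y$-expression), and these combine over all $r$ into $\frac{1}{v_\mu v_\lambda}\prod_r\left(\frac{u^r}{q^r-1}\right)^{n_r(\mu)}\prod_r\left(\frac{u^r}{q^r+1}\right)^{n_r(\lambda)}$ once the definitions $v_\mu = \prod_r n_r(\mu)!\,(2r)^{n_r(\mu)}$ are unwound. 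The remaining factors, all with $x_k = y_k = 1$, give $\prod_{k\geq 1}\exp\!\left[\frac{u^k}{(q^k-1)2k} + \frac{u^k}{(q^k+1)2k}\right] = \prod_{k\geq 1}\exp\!\left[\frac{u^k q^k}{k(q^{2k}-1)}\right]$.

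Finally I would simplify that last exponential product to the claimed $\prod_{k\geq 1}\frac{1}{1-u/q^{2k-1}}$. Substituting $u \mapsto u/q$ (which is legitimate inside a formal identity in $u$, and reversible) turns $\sum_k \frac{u^k q^k}{k(q^{2k}-1)}$ into $\sum_k \frac{u^k}{k(q^{2k}-1)}$, to which Lemma~\ref{2props}(i) — with $q$ replaced by $q^2$ — applies directly, giving $\prod_{r\geq 1}\frac{1}{1-u/q^{2r}}$; undoing the substitution yields $\prod_{r\geq 1}\frac{1}{1-u/q^{2r-1}}$, as needed. Assembling the three pieces completes the proof. The only genuinely delicate point — and the step I expect to require the most care — is making the differentiation-at-$1$ argument rigorous as an identity of formal power series in $u$ with the $x_r, y_r$ as bookkeeping variables: one must check that the operator ``apply $\frac{1}{j!}\partial^j$ then set the variable to $1$'' is well-defined on the relevant power-series ring and does commute with the infinite product, which follows because only finitely many factors are affected and each coefficient of $u^n$ is a polynomial in finitely many of the $x_r, y_r$. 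Everything else is the routine bookkeeping of matching $v_\mu$, $v_\lambda$ against the $(2r)^{n_r}$ and $n_r!$ factors, exactly parallel to Chen's derivation of Equation~\eqref{cheneq}.
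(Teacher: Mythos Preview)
Your proposal is correct and follows essentially the same approach as the paper: differentiate the master generating function of Theorem~\ref{Spgen} in the $x_r$ and $y_r$ variables the appropriate number of times, set all variables equal to $1$, and then simplify the resulting exponential product via Lemma~\ref{2props}(i). Your treatment is in fact slightly more explicit than the paper's on two points (the $1/n_r!$ factors needed to convert falling factorials to binomial coefficients, and the substitution that reduces the exponential product to the form of Lemma~\ref{2props}(i)), but the argument is the same.
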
 
\noindent If both $\mu$ and $\lambda$ are the empty partitions, we define the constant $n=0$ term to be $1$; otherwise, the constant term is $0$.

\begin{proof} We take the generating function of Theorem \ref{Spgen}, and for each $r \geq 1$ we differentiate $n_r(\mu)$ many times with respect to the variable $x_r$,
and $n_r(\lambda)$ many times with respect to $y_r$. We then set all variables $x_r=y_r=1$. The result is that the quantity \[ \sum_{T \in T(n,q)} {X \choose \mu} {Y \choose \lambda} (T) \]
is equal to
\begin{eqnarray*}
 && \frac{|\Spnq|}{v_{\mu} v_{\lambda}} [u^n] \prod_{r=1}^{|\mu|} \left( \frac{u^r}{q^r-1} \right)^{n_r(\mu)}\prod_{r=1}^{|\lambda|} \left( \frac{u^r}{q^r+1} \right)^{n_r(\lambda)}
  \\ &&  \cdot
   \prod_{k=1}^{\infty} \exp \left[ \frac{u^k}{(q^k-1)2k} + \frac{u^k}{(q^k+1)2k} \right].
\end{eqnarray*}
By Lemma \ref{2props} (i), this is
\[ \frac{|\Spnq|}{v_{\mu} v_{\lambda}} [u^n] \prod_{r=1}^{|\mu|} \left( \frac{u^r}{q^r-1} \right)^{n_r(\mu)}
\prod_{r=1}^{|\lambda|} \left( \frac{u^r}{q^r+1} \right)^{n_r(\lambda)}
\prod_{k=1}^{\infty} \frac{1}{1-u/q^{2k-1}}\] as claimed.
\end{proof}


With the result of Theorem \ref{basic}, we can now prove Theorem \ref{asym} (ii), which gives explicit formulas for the stable mean values of the polynomial statistics ${X \choose \mu} {Y \choose \lambda}$ on $T(n,q)$, in the limit as $n$ tends to infinity.

\begin{proof}[Proof of Theorem \ref{asym} (ii)] By Theorem \ref{basic},
\[ \frac{1}{q^{2n^2}} \sum_{T \in T(n,q)} {X \choose \mu} {Y \choose \lambda} (T) \] is equal to
\[ \frac{|\Spnq|}{q^{2n^2} \, v_{\mu} \, v_{\lambda}} [u^n] \prod_{r=1}^{|\mu|} \left( \frac{u^r}{q^r-1} \right)^{n_r(\mu)}
\prod_{r=1}^{|\lambda|} \left( \frac{u^r}{q^r+1} \right)^{n_r(\lambda)}
\prod_{k=1}^{\infty} \frac{1}{1-u/q^{2k-1}}. \]

Since $\displaystyle [u^n]f(u)=[u^n]\frac{f(uq)}{q^n}$ for any function $f$, this can be rewritten as
\[ \frac{|\Spnq|}{q^{2n^2+n} \; v_{\mu} \, v_{\lambda}} [u^n]\prod_{r=1}^{|\mu|}\left( \frac{u^r q^r}{q^r-1} \right)^{n_r(\mu)}
\prod_{r=1}^{|\lambda|} \left( \frac{u^r q^r}{q^r+1} \right)^{n_r(\lambda)}
\prod_{k =1}^{\infty} \frac{1}{1-u/q^{2k-2}}, \] which is equal to
\begin{eqnarray*}
& & (1-1/q^2)(1-1/q^4) \cdots (1-1/q^{2n}) \\
& & \cdot \frac{1}{v_{\mu} \, v_{\lambda}}[u^n] \prod_{r=1}^{|\mu|}\left( \frac{u^r q^r}{q^r-1} \right)^{n_r(\mu)}
\prod_{r=1}^{|\lambda|} \left( \frac{u^r q^r}{q^r+1} \right)^{n_r(\lambda)}
\frac{1}{1-u} \prod_{k=1}^{\infty} \frac{1}{1-u/q^{2k}}.
\end{eqnarray*}

Now applying Lemma \ref{tay} proves the theorem.
\end{proof}

With  Theorem \ref{asym}(ii) we recover the following four results from Table 1 of Jim\'enez Rolland--Wilson \cite{RW}.\\

\begin{example}[{\cite[Table 1]{RW}}]\label{EXARW}
{\small
\begin{flalign*}(a) & \lim_{n \rightarrow \infty} \frac{1}{q^{2n^2}} \sum_{T \in T(n,q)}  X_1(T)  = \frac{q}{2(q-1)}.\\
  (b) & \lim_{n \rightarrow \infty} \frac{1}{q^{2n^2}} \sum_{T \in T(n,q)} \left[ X_1(T) + Y_1(T) \right] =\frac{q}{2(q-1)} + \frac{q}{2(q+1)} = \frac{q^2}{q^2-1}.
  \end{flalign*}
\begin{flalign*}
(c) &  \lim_{n \rightarrow \infty} \frac{1}{q^{2n^2}} \sum_{T \in T(n,q)}  \left[ {X_1(T) + Y_1(T) \choose 2} - (X_2(T) + Y_2(T)) \right] \\
  = &\lim_{n \rightarrow \infty} \frac{1}{q^{2n^2}} \sum_{T \in T(n,q)}  \left[ {X_1(T) \choose 2} + {Y_1(T) \choose 2} + X_1(T) Y_1(T) - (X_2(T) + Y_2(T)) \right] \\
=& \frac{1}{2} \left[ \frac{q}{2(q-1)} \right]^2 + \frac{1}{2} \left[ \frac{q}{2(q+1)} \right]^2
+ \frac{q}{2(q-1)} \frac{q}{2(q+1)} - \frac{q^2}{4(q^2-1)} - \frac{q^2}{4(q^2+1)} \\
 = &  \frac{q^4}{(q^2-1)(q^4-1)}.
 \end{flalign*}
\begin{flalign*}
(d) & \lim_{n \rightarrow \infty} \frac{1}{q^{2n^2}} \sum_{T \in T(n,q)} \left[ X_2(T) - Y_2(T) \right] =\frac{q^2}{4(q^2-1)} - \frac{q^2}{4(q^2+1)} = \frac{q^2}{2(q^4-1)}. &
\end{flalign*}
}

\end{example}

\subsection{Maximal tori in $Sp_{2n}(\Co)$ and $SO_{2n+1}(\Co)$}\label{MaxCha}

Next we  follow Chen and Specter's approach \cite{Che, CS} and use our statistical computations on maximal tori in $Sp_{2n}(\overline{F_q})$ and $SO_{2n+1}(\overline{F_q})$ to obtain information about cohomology of the spaces of maximal tori $T(n,\Co)$ in the complex algebraic groups $Sp_{2n}(\Co)$ and $SO_{2n+1}(\Co)$. The bridge between the combinatorial and the topological data is given by  Lehrer's Theorem \ref{classf} (ii).  In the remainder of this section  $R_n^i$ always denotes the $i^{th}$ graded piece of the complex coinvariant algebra $R_n^*$ in type $B/C$. We first prove Theorem \ref{useful}.

\begin{proof}[Proof of Theorem \ref{useful}] Fix $n$ and $\sigma \in B_n$. In Theorem \ref{classf} (ii) let $\chi$ be the class function on $B_n$ which is $1$ on elements
of type $\sigma$ and $0$ else. Since the centralizer size of $\sigma$ is equal to
\[ \prod_{r=1}^n X_r(\sigma)! Y_r(\sigma)! (2r)^{X_r(\sigma)+Y_r(\sigma)} \] it follows that the number of maximal tori of type $\sigma$ is equal to
\[ q^{2n^2} \sum_{i=0}^{n^2} \chi_{R_n^i}(\sigma) q^{-i} \frac{1}{\prod_{r=1}^n X_r(\sigma)! Y_r(\sigma)! (2r)^{X_r(\sigma)+Y_r(\sigma)}}.\]

On the other hand, Lemma \ref{typeSp} indicates that the number of maximal tori of type $\sigma$ is equal to
\[ \frac{|\Spnq |}{\prod_{r=1}^n X_r(\sigma)! Y_r(\sigma)! (2r)^{X_r(\sigma)+Y_r(\sigma)} \prod_{r=1}^n (q^r-1)^{X_r(\sigma)} (q^r+1)^{Y_r(\sigma)}} .\]
Thus
\[ q^{2n^2} \sum_{i=0}^{n^2} \chi_{R_n^i}(\sigma) q^{-i} = \frac{|\Spnq |}{\prod_{r=1}^n (q^r-1)^{X_r(\sigma)} (q^r+1)^{Y_r(\sigma)}} .\]

Since $|\Spnq  |=q^{2n^2+n} (1-1/q^2)(1-1/q^4) \cdots (1-1/q^{2n})$, it follows that
\[ \sum_{i=0}^{n^2} \chi_{R_n^i}(\sigma) q^{-i} = \frac{(1-1/q^2)(1-1/q^4) \cdots (1-1/q^{2n})}
{\prod_{r=1}^n (1-1/q^r)^{X_r(\sigma)} (1+1/q^r)^{Y_r(\sigma)}} .\] Since this equality holds for any prime power $q$, we can set $z=1/q$ to 
complete the proof.
\end{proof}

Now we prove Theorem \ref{typBan}, giving a generating function for the character polynomials associated to the cohomology groups of the generalized flag varieties in type B/C.

\begin{proof} [Proof of Theorem \ref{typBan}] Fix a positive integer $n$ and signed permutation $\sigma \in B_n$.  Let $Q_i$ be the character polynomial defined by the generating function
\[  \sum_{i=0}^{\infty} Q_i(\sigma) t^i = \prod_{k=1}^{\infty} \frac{(1-t^{2k})}{(1-t^k)^{X_k(\sigma)} (1+t^k)^{Y_k(\sigma)}} \]
Our goal is to show that the terms in the series
\[ \sum_{i=0}^{n^2} \chi_{R_n^i}(\sigma) t^i - \sum_{i=0}^{\infty} Q_i(\sigma) t^i \]
 vanish for $i \leq 2n+1$. By Theorem \ref{useful} this difference is equal to
\begin{align*}
& \prod_{k=1}^n \frac{(1-t^{2k})}{(1-t^k)^{X_k(\sigma)} (1+t^k)^{Y_k(\sigma)}} -
\prod_{k=1}^{\infty} \frac{(1-t^{2k})}{(1-t^k)^{X_k(\sigma)} (1+t^k)^{Y_k(\sigma)}} \\
& =  \prod_{k=1}^n \frac{(1-t^{2k})}{(1-t^k)^{X_k(\sigma)} (1+t^k)^{Y_k(\sigma)}}
\left[ 1 - \prod_{k > n} \frac{(1-t^{2k})}{(1-t^k)^{X_k(\sigma)} (1+t^k)^{Y_k(\sigma)}} \right] \\
\end{align*}
Since $\sigma \in B_n$, the class functions $X_k(\sigma)$ and $Y_k(\sigma)$ vanish when $k>n$, and so this series equals
\begin{eqnarray*}
\prod_{k=1}^n \frac{(1-t^{2k})}{(1-t^k)^{X_k(\sigma)} (1+t^k)^{Y_k(\sigma)}} \left[ 1 - \prod_{k > n} (1-t^{2k}) \right].
\end{eqnarray*} The smallest power of $t$ in this series is $t^{2n+2}$, which completes the proof.
\end{proof}

\begin{remark}[\textbf{The stable range in Theorem \ref{typBan} is sharp}] \label{CoinvariantSharp} The bound given in Theorem \ref{typBan} is optimal: since for all $\sigma \in B_n$ the power series   \begin{eqnarray*}
\prod_{k=1}^n \frac{(1-t^{2k})}{(1-t^k)^{X_k(\sigma)} (1+t^k)^{Y_k(\sigma)}} \left[ 1 - \prod_{k > n} (1-t^{2k}) \right]
\end{eqnarray*}  appearing in the proof includes the term $t^{2n+2}$, the equality  $\chi_{R_n^i} = Q_i$ holds only for $i \leq 2n+1$. We can see this concretely in small degrees: the spaces $R^2_0$ and $R^4_1$ are both zero, but by inspection the character polynomials $Q_2$ and $Q_4$ for the sequences $\{R^2_n\}_n$ and $\{R^4_n\}_n$  (given explicitly in Section \ref{SectionCoinvariant}) do not vanish identically on the groups $B_0$ and $B_1$, respectively. 
\end{remark}

Next we prove  Theorem \ref{analog1}, which  gives a double generating function for the twisted Betti numbers of $T(n,\Co)$.

\begin{proof}[Proof of Theorem \ref{analog1}] For $q$ a prime power,
\begin{eqnarray*}
& & \sum_{n =0}^{\infty} \sum_{i =0}^{n^2} \frac{\beta_i(n)}{(1-1/q^2) \cdots (1-1/q^{2n})} q^{-i} u^n \\
& = & \sum_{n=0}^{\infty} \frac{1}{|\Spnq|} \left[ q^{2n^2} \sum_{i = 0}^{n^2} \beta_i(n) q^{-i} \right] (uq)^n
\end{eqnarray*} By Theorem \ref{classf} (ii), this is
\begin{eqnarray*}
 \sum_{n = 0}^{\infty} \frac{1}{|\Spnq|} \left[ \sum_{T \in T(n,q)} {X \choose \mu} {Y \choose \lambda} (T) \right] (uq)^n
\end{eqnarray*} which (by evaluating the series in Theorem \ref{basic} at $uq$) equals
\begin{eqnarray*}
\frac{1}{v_{\mu} v_{\lambda}} \prod_{r=1}^{|\mu|} \left( \frac{u^r q^r}{q^r-1} \right)^{n_r(\mu)}
\prod_{r = 1}^{|\lambda|} \left( \frac{u^r q^r}{q^r+1} \right)^{n_r(\lambda)} \prod_{r =1}^{\infty} \frac{1}{1-u/q^{2r-2}}.
\end{eqnarray*}
Since the equality holds for all prime powers $q$, the equality also holds when $q^{-1}$ is replaced
by a formal variable $z$.
\end{proof}

We now prove Corollary \ref{BettiOGF}, a generating function for the stable twisted Betti numbers associated to the character polynomial ${X \choose \mu}{Y \choose \lambda}$. We are grateful to Weiyan Chen for help with filling in the details of this proof. 

\begin{proof}[Proof of Corollary \ref{BettiOGF}] 
Fix a double partition $(\mu, \lambda)$, and let
$$ \beta_i(n) = \left\langle {X \choose \mu}{Y \choose \lambda}, R_n^i \right\rangle_{B_n} .$$

For each $n \geq 0$, define the power series $f_n(z)$ by 
$$  f_n(z) :=\sum_{i = 0}^{n^2} \frac{\beta_i(n) z^i}{(1-z^2)(1-z^4) \cdots (1-z^{2n})}.$$
By Theorem \ref{analog1}, we have equality
\begin{align*}
 \sum_{n=0}^{\infty} f_n(z) u^n  
 &=\sum_{n = 0}^{\infty} \sum_{i = 0}^{n^2} \frac{\beta_i(n) z^i u^n}{(1-z^2)(1-z^4) \cdots (1-z^{2n})} \\ 
 &= \frac{1}{v_{\mu}\, v_{\lambda}} \prod_{r=1}^{|\mu|} \left( \frac{u^r}{1-z^r} \right)^{n_r(\mu)} 
\prod_{r=1}^{|\lambda|} \left( \frac{u^r}{1+z^r} \right)^{n_r(\lambda)} \prod_{r=1}^{\infty} \frac{1}{1-uz^{2r-2}}. 
\end{align*}

Let $f(z)$ denote the pointwise limit of the sequence $\{f_n(z)\}$ on the open unit disk $\{z \in \mathbb{C} \; : \; |z|<1\}$. Then for fixed $z$ with $|z|<1$, 
\begin{align*}
 f(z) &=  \lim_{n \to \infty} [u^n]  \sum_{n=0}^{\infty} f_n(z) u^n \\ 
  & =   \lim_{n \rightarrow \infty} [u^n] \frac{1}{v_{\mu}\, v_{\lambda}} \prod_{r=1}^{|\mu|} \left( \frac{u^r}{1-z^r} \right)^{n_r(\mu)}
\prod_{r=1}^{|\lambda|} \left( \frac{u^r}{1+z^r} \right)^{n_r(\lambda)} \prod_{r=1}^{\infty} \frac{1}{1-uz^{2r-2}}.
\end{align*}
By Lemma \ref{tay} this pointwise limit is
$$ f(z) = \frac{1}{v_{\mu} \, v_{\lambda}} \prod_{r=1}^{|\mu|} \left( \frac{1}{1-z^r} \right)^{n_r(\mu)} \prod_{r=1}^{|\lambda|} \left( \frac{1}{1+z^r} \right)^{n_r(\lambda)}
\prod_{r=1}^{\infty} \frac{1}{1-z^{2r}}.$$ 

Next, recall that by Wilson \cite[Corollary 6.5]{W} and Jim\'enez Rolland--Wilson \cite[Proposition 3.1]{RW}, for each fixed $i$ the sequence $\beta_i(n)$ is eventually constant, with stable value $\beta_i := \lim_{n \to \infty} \beta_i(n)$; see Corollary \ref{TWISTED}. It follows that the sequence $\{f_n(z)\}$ also converges as a sequence of formal power series, that is, for each $i \geq 0$ the coefficient $[z^i]f_n(z)$ is eventually constant in $n$. Its limit, the power series $g(z)$ with $[z^i]g(z) = \lim_{n \to \infty} [z^i] f_n(z)$, is
$$g(z) =  \lim^{z\text{-adic}}_{n \to \infty}   \sum_{i = 0}^{n^2} \frac{\beta_i(n) z^i}{(1-z^2)(1-z^4) \cdots (1-z^{2n})} =  \sum_{i =0}^{\infty} \frac{\beta_i z^i}{\prod_{r=1}^{\infty} (1-z^{2r})}.$$

To complete the proof, we wish to equate the pointwise limit $f(z)$ and the formal power series limit $g(z)$ of the sequence $\{ f_n(z)\}$. (These limits need not be equal in general, even when both limits exist and are analytic functions on the unit disk.) In order to do this, we will prove that the sequence $\{ f_n(z)\}$ converges {\it uniformly} to $f(z)$ on the closed disk $\left\{ z \in \mathbb{C} \; : \; |z| \leq \frac12 \right\}$, and then equality $f(z)=g(z)$ follows from an application of the Cauchy integral formula. 

To show uniform convergence, by Dini's theorem, it suffices to show that at each point $z$ in the closed disk, the sequence $\{f_n(z)\}$ is monotone increasing. But
 \begin{align*} & \sum_{n=0}^{\infty} \big( f_n(z) - f_{n-1}(z) \big) u^n = (1-u)  \sum_{n=0}^{\infty} f_n(z) u^n \\ 
& \qquad =  \frac{1}{v_{\mu}\, v_{\lambda}} \prod_{r=1}^{|\mu|} \left( \frac{u^r}{1-z^r} \right)^{n_r(\mu)}
\prod_{r=1}^{|\lambda|} \left( \frac{u^r}{1+z^r} \right)^{n_r(\lambda)} \prod_{r=1}^{\infty} \frac{1}{1-uz^{2r}},
\end{align*}
and by inspection, for each $n \geq0 $ and $z$ with $|z|\leq \frac12$, the coefficient $\big( f_n(z) - f_{n-1}(z) \big)$ of $u^n$ is positive as desired. 
We can therefore equate $f(z)=g(z)$, and we conclude
\[ \sum_{i =0}^{\infty} \beta_i z^i = \frac{1}{v_{\mu} \, v_{\lambda}} \prod_{r=1}^{|\mu|} \left( \frac{1}{1-z^r} \right)^{n_r(\mu)} \prod_{r=1}^{|\lambda|} \left( \frac{1}{1+z^r} \right)^{n_r(\lambda)}
 \]
as claimed.
\end{proof}

\begin{remark} Corollary \ref{BettiOGF} can also be proven using Theorem \ref{asym} and results of Jim\'enez Rolland and Wilson \cite{RW}. Specifically, combining Theorem \ref{asym}(ii) and Lehrer's identity Theorem \ref{classf} (ii) , we obtain the formula
\begin{align*} \lim_{n \rightarrow \infty}  \sum_{i= 0}^{n^2} q^{-i}  \beta_i(n) 
 =  \frac{1}{v_{\mu} v_{\lambda}} \prod_{r = 1}^{|\mu|}  \left( \frac{q^r}{q^r-1} \right)^{n_r(\mu)}
\prod_{r=1}^{|\lambda|}  \left( \frac{q^r}{q^r+1} \right)^{n_r(\lambda)}.
\end{align*}
By \cite[Theorem 4.3]{RW} the left-hand side of this formula is equal to $$ \sum_{i= 0}^{\infty} \lim_{n \rightarrow \infty} q^{-i} \beta_i(n) = \sum_{i= 0}^{\infty} q^{-i} \beta_i, $$ and so by the substitution $z=\frac1q$ we conclude Corollary \ref{BettiOGF}. 
The proof of Corollary \ref{BettiOGF} given above avoids the nontrivial combinatorial and analytic work performed in  \cite{RW}. 
\end{remark}
 
Using the generating function from Corollary \ref{BettiOGF}, we now prove Corollary \ref{RECURR}, on linear recurrence relations satisfied by the stable twisted Betti numbers.

\begin{proof}[Proof of Corollary \ref{RECURR}]

A general character polynomial $P$ may be written as a linear combination of character polynomials of the form ${X \choose \mu}{Y \choose \lambda}$ with $|\mu|+|\lambda| \leq \deg(P)$. Hence, its generating function can be written as a rational function with denominator given by the common denominator of the generating functions
$$  \frac{1}{v_{\mu} v_{\lambda}} \prod_{r=1}^{|\mu|} \left( \frac{1}{1-z^r} \right)^{n_r(\mu)} \prod_{r=1}^{|\lambda|} \left( \frac{1}{1+z^r} \right)^{n_r(\lambda)} \qquad |\mu|+|\lambda| \leq \deg(P).$$
A coarse bound on the degree of this denominator follows from the observation that, for each $1 \leq r \leq \deg(P)$, the corresponding factors $(1-z^r)$ and $(1+z^r)$ each appear in this common denominator with multiplicities at most $\frac{\deg(P)}{r}$, and hence together contribute at most $2r\left(\frac{\deg(P)}{r}\right)=2\deg(P)$ to its degree. This gives a total degree of at most $2\deg(P)^2$. \end{proof}


The form of the generating functions in Corollary \ref{BettiOGF} also implies that the stable twisted Betti numbers $\beta_d$ will be quasipolynomial, as in Corollary \ref{quasipolynomiality}.

\begin{proof}[Proof of Corollary \ref{quasipolynomiality}] Stanley \cite[Proposition 4.4.1]{St1} proves that an integer function $p(t)$ is quasipolynomial with period $M$ if and only if the associated generating function $\sum_{n \geq 0}^{\infty} p(n) x^n$ is a rational function $A(x)/B(x)$ (reduced to lowest terms) such that all roots of $B(x)$ are $M^{th}$ roots of unity, and deg$(A)<$deg$(B)$. Moreover, the degree of $p(t)$ is strictly bounded above by the largest multiplicity of a root of $B(x)$. 

If the character polynomial $P$ has constant term zero, we may write $P$ as a linear combination of character polynomials ${X \choose \mu}{Y \choose \lambda}$ with $|\mu|+|\lambda|>0$. Then by Corollary 
 \ref{BettiOGF}, the generating function for the associated stable Betti numbers is given by a linear combination of functions of the form $$  \frac{1}{v_{\mu} v_{\lambda}} \prod_{i=1}^{|\mu|} \left( \frac{1}{1-z^i} \right)^{n_i(\mu)} \prod_{i=1}^{|\lambda|} \left( \frac{1}{1+z^i} \right)^{n_i(\lambda)} $$
such that $0<|\mu|+|\lambda| \leq \deg(P)$.
Since the denominators $(1-z^i)$ and $(1+z^i)$ have positive degree and all roots are $(2i)^{th}$ roots of unity, we conclude that these stable Betti numbers  are quasipolynomial, and have a quasiperiod bounded by the least common multiple of the numbers $2, 4, \ldots 2\,$deg$(P)$. Moreover, in the common denominator for this generating function the maximum multiplicity of any root is $\deg(P)$, and so the quasipolynomial has degree at most $\deg(P)-1$ as claimed.

Now suppose that $P$ has constant term $c$. Only the $0^{th}$ graded piece $R^0_n$ of the coinvariant algebra contains the trivial $B_n$ representation, hence the stable Betti numbers for $P$ and $(P-c)$ will be the same for all $i >0$. This concludes the proof.   \end{proof}

\subsection{Examples of stable twisted Betti numbers in type B/C} \label{SectionExamples}

We end the paper using Corollary \ref{BettiOGF} to compute some examples of stable Betti numbers.
Let $\Co^n$ denote the canonical $n$-dimensional $B_n$-representation by signed permutation matrices; this is the irreducible representation associated to the double partition $( (n-1), (1) )$.  Below we describe combinatorial properties of twisted Betti numbers with coefficients in $\Co^n$, and some symmetric and exterior powers of $\Co^n$ in small degree.

\begin{example}(Example: $\Co^n$).

Character polynomial:
\begin{align*}
P^{ \Co^n} = & X_1 - Y_1
\\ = & {X \choose \Y{1} } - {Y \choose \Y{1} }
\end{align*}

Betti numbers:
\begin{align*} \sum_{i=0}^{\infty}  \beta_i z^i  &=  \frac{z}{(1-z)(1+z)}  \\
= & z + z^3 + z^5 + z^7 + z^9 + \cdots + z^{2d+1} + \cdots \end{align*}

Recurrence: \qquad $\displaystyle \beta_d = \beta_{d-2} \qquad$ for $d \geq 3$ \\

Quasipolynomiality: \qquad For $d \geq 0$,  $\displaystyle \beta_d = \left\{ \begin{array}{cc} 0 & d \equiv 0 \pmod{2} \\  1 & d \equiv 1 \pmod{2} \end{array} \right. $ \\

\end{example}
\begin{example} ({$\bigwedge^2 \Co^n$}).

Character polynomial:
\begin{align*}
P^{\bigwedge^2 \Co^n} = & { X_1 \choose 2} + {Y_1 \choose 2} - X_2 +Y_2 -X_1Y_1
\\ = & {X \choose \Y{1,1} } + {Y \choose \Y{1,1} } -  {X \choose \Y{2} } + {Y \choose \Y{2} } - {X \choose \Y{1} } { Y \choose \Y{1} }
\end{align*}

Betti numbers:
\begin{align*} \sum_{i=0}^{\infty}  \beta_i z^i = & \frac{z^4}{(1-z^2)^2(1+z^2)}  \\
= &  z^4 +z^6 +2z^8 + 2z^{10} + 3z^{12} + 3z^{14} + \cdots + \left\lfloor \frac{d}{2} \right\rfloor z^{2d} + \cdots
\end{align*}

Recurrence: \qquad $\displaystyle \beta_d = \beta_{d-2} +  \beta_{d-4}  - \beta_{d-6}  \qquad$ for $d \geq 6$ \\

Quasipolynomiality: \qquad For $d \geq 0$,  $$\beta_d = \left\{ \begin{array}{cl} \frac{d}{4} & d \equiv 0 \pmod{4} \\[5pt]  \frac{d-2}{4} & d \equiv 2 \pmod{4}\\[5pt] 0 & d \equiv 1,3 \pmod{4}  \end{array} \right. $$

\end{example}

\begin{example}({$\bigwedge^3 \Co^n$}).

Character polynomial:
\begin{align*}
& { X_1 \choose 3} - { Y_1 \choose 3}  + X_1{ Y_1 \choose 2} - Y_1{ X_1 \choose 2} \\
& - X_1X_2 +X_2Y_1 + Y_2 X_1 -Y_1Y_2 + X_3 - Y_3
\end{align*}


Betti numbers:
\begin{align*}  \sum_{i=0}^{\infty} \beta_i z^i  =&  \;  \frac{z^9}{(1-z)^2(1+z)^2(1+z^2)(1-z^3)(1+z^3)}   \\
= & \; z^9+z^{11}+2 z^{13}+3 z^{15}+4 z^{17}+5 z^{19}+7 z^{21}+8 z^{23} \\
&+10 z^{25}+12 z^{27}+14 z^{29}+16 z^{31}+19 z^{33}+21 z^{35} \\
& +24 z^{37}+27 z^{39}+30 z^{41}+33 z^{43}+\cdots
\end{align*}

Recurrence: \qquad $$ \beta_d = \beta_{d-2} +  \beta_{d-4}  - \beta_{d-8} - \beta_{d-10} + \beta_{d-12}    \qquad \text{for } d \geq 12$$

Quasipolynomiality: \qquad For $d \geq 0$,  $$\displaystyle \beta_d = \left\{ \begin{array}{cl}
\frac{d^2}{48} - \frac{d}{8} + \frac{5}{48} & d \equiv 1, 5 \pmod{12} \\[5pt]
\frac{d^2}{48} - \frac{d}{8} + \frac{9}{48} & d \equiv 3 \pmod{12} \\[5pt]
\frac{d^2}{48} - \frac{d}{8} - \frac{7}{48} & d \equiv 7, 11 \pmod{12} \\[5pt]
\frac{d^2}{48} - \frac{d}{8} + \frac{21}{48} & d \equiv 9 \pmod{12} \\[5pt]
0 & d \equiv 0, 2, 4, 6, 8, 10 \pmod{12}  \end{array} \right. $$

\end{example}

\section*{Acknowledgements}
We are very grateful to Weiyan Chen for his generous answers to our questions and for suggesting directions to pursue with this approach. We also thank Robert Guralnick  and Jon Chaika for helpful discussions. We are also grateful to our referee for feedback, and in particular for catching a mistake in Corollary 1.17.


\end{document}